\newcommand\mytoday{\number\year-\ifcase\month\or 01\or 02\or 03\or 04\or 05\or 06\or 07\or 08\or 09\or 10\or 11\or 12\fi-\ifcase\day\or 01\or 02\or 03\or 04\or 05\or 06\or 07\or 08\or 09\or 10\or 11\or 12\or 13\or 14\or 15\or 16\or 17\or 18\or 19\or 20\or 21\or 22\or 23\or 24\or 25\or 26\or 27\or 28\or 29\or 30\or 31\fi} 
\newcolumntype{d}[2]{D{.}{.}{#1.#2}} 
\newcommand*{\abstractnoindent}{} 
\let\abstractnoindent\abstract
\renewcommand*{\abstract}{\let\quotation\quote\let\endquotation\endquote
\abstractnoindent}
\renewcommand{\p@enumii}[1]{\theenumi(#1)}
\theoremstyle{break} 
\newtheorem{definition}{Definition}[section] 
\newtheorem{lemma}[definition]{Lemma}
\newtheorem{remark}[definition]{Remark}
\newtheorem{example}[definition]{Example}
\theoremstyle{nonumberbreak} 
\newtheorem{proof}{Proof}
\providecommand{\keywords}[1]{\textbf{Keywords: } #1}
\providecommand{\subjectclass}[1]{\textbf{2010 AMS Subject Classification: } #1}
\newcommand*{\IP}{\mathbb{P}}
\newcommand*{\IE}{\mathbb{E}}
\newcommand*{\IR}{\mathbb{R}}
\newcommand*{\IN}{\mathbb{N}}
\newcommand*{\I}[1]{\textbf{1}_{#1}}
\newcommand*{\F}{\mathfrak{F}}
\tikzstyle arrowstyle=[scale=2]
\tikzstyle directed=[postaction={decorate,decoration={markings,
    mark=at position 1 with {\arrow[arrowstyle]{stealth}}}}]
\begin{document}

\renewcommand{\figurename}{Fig.}

\thispagestyle{plain}
	\begin{center}
		{\bfseries\Large Subordinators which are infinitely divisible w.r.t.\ time: Construction, properties, and simulation of max-stable sequences and infinitely divisible laws}
		\par\bigskip
		\vspace{1cm}
		
		{\Large Jan-Frederik Mai\footnote{Technische Universit\"{a}t M\"{u}nchen, Parkring 11, 85478 Garching--Hochbr\"{u}ck, \texttt{mai@tum.de}.} and Matthias Scherer\footnote{Technische Universit\"{a}t M\"{u}nchen, Parkring 11, 85478 Garching--Hochbr\"{u}ck, \texttt{scherer@tum.de}.}}\\
		\vspace{0.2cm}
		{Version of \today}\\
	\end{center}
\begin{abstract}
The concept of a L\'evy subordinator is generalized to a family of non-decreasing stochastic processes, which are parameterized in terms of two Bernstein functions. Whereas the independent increments property is only maintained in the L\'evy subordinator special case, the considered family is always strongly infinitely divisible with respect to time (IDT), meaning that a path can be represented in distribution as a finite sum with arbitrarily many summands of independent and identically distributed paths of another process. Besides  distributional properties of the process, we present two applications to the design of accurate and efficient simulation algorithms. First, each member of the considered family corresponds uniquely to an exchangeable max-stable sequence of random variables, and we demonstrate how the associated extreme-value copula can be simulated exactly and efficiently from its Pickands dependence measure. Second, we show how one obtains different series and integral representations for infinitely divisible  probability laws by varying the parameterizing pair of Bernstein functions, without changing the law of one-dimensional margins of the process. As a particular example, we present an exact simulation algorithm for compound Poisson distributions from the Bondesson class, for which the generalized inverse of the distribution function of the associated Stieltjes measure can be evaluated accurately.
\end{abstract}
\keywords{strong infinitely divisible w.r.t.\ time; subordinator; infinitely divisible law; Pickands dependence function; Bondesson class; Bernstein function.}\newline
\subjectclass{60G51; 60G70; 60G09.}
\newpage
\section{Introduction}
We recall that a \emph{L\'evy subordinator} $L=\{L_t\}_{t \geq 0}$ is a non-decreasing stochastic process on a probability space $(\Omega,\mathcal{F},\IP)$ with independent and stationary increments, whose paths are almost surely right-continuous and start at $L_0=0$, see \cite{bertoin99} for a textbook treatment. Intuitively, L\'evy subordinators are the continuous-time analog of discrete-time random walks with non-negative increments. The law of $L$, that is its finite-dimensional distributions, is fully determined by the law of any random variable $L_t$ with $t>0$, whose Laplace transform is given by 
\begin{gather}
\IE\Big[ e^{-x\,L_t}\Big] = e^{-t\,\Psi_L(x)},\quad \Psi_L(x) = \mu_L\,x+\int_{(0,\infty]}\big( 1-e^{-u\,x}\big)\,\nu_L(\mathrm{d}u),\quad x \geq 0,
\label{LevyKhinchin}
\end{gather}
where the so-called \emph{L\'evy measure} $\nu_L$ satisfies the condition $\int_{0}^{\infty}\min\{u,1\}\,\nu_L(\mathrm{d}u)<\infty$ and $\mu_L \geq 0$ is a drift constant. The function $\Psi_L$ is a so-called \emph{Bernstein function}, see \cite{schilling10} for a textbook treatment, and the number $\nu(\{\infty\})$ is called the \emph{killing rate} of $L$, because it corresponds to an exponential rate at which $L$ jumps to the absorbing graveyard state $\{\infty\}$, i.e.\ is ``killed.'' The so-called \textit{L\'evy--Khinchin formula} (\ref{LevyKhinchin}) establishes a one-to-one correspondence between Bernstein functions and L\'evy subordinators, so that $\Psi_L$  (or equivalently the pair $(\mu_L,\nu_L)$) provides a convenient analytical description of the law of $L$. 
\par
The purpose of the present article is to embed the concept of a L\'evy subordinator into a larger family of non-decreasing processes that can be parameterized in terms of a pair $(\Psi_L,\Psi_F)$ of two Bernstein functions. On the one hand, the enlarged family of processes still satisfies the concept of being \emph{strongly infinite divisible with respect to time}, as explained below, which renders it a natural generalization from an algebraic viewpoint. On the other hand, our generalization is inspired by two practical applications: Firstly, the processes can be used to construct and simulate multivariate extreme-value distributions. Second, they provide a reasonable framework to derive series representations for infinitely divisible laws on the positive half-axis, which can be used for simulation.
\par
For a pair $(F,L)$ of a distribution function $F$ of some non-negative random variable with finite, positive mean $\int_0^{\infty}\big(1-F(x)\big)\,\mathrm{d}x \in (0,\infty)$ and a L\'evy subordinator $L$ without drift, the present article studies distributional properties of the stochastic process
\begin{gather}
H_t:=H_t^{(F,L)}=\int_{0}^{\infty}-\log\Big\{ F\Big( \frac{s}{t}-\Big)\Big\}\,\mathrm{d}L_s,\quad t \geq 0,
\label{definition_H}
\end{gather}
the integral being defined pathwise in the usual Riemann--Stieltjes sense, and $F(x-):=\lim_{u \nearrow x}F(u)$. By definition, $H_0=0$ (using the notations $1/0:=\infty$ and $F(\infty -):=1$), the path $t \mapsto H_t$ is almost surely non-decreasing and right-continuous, and $H_t \in [0,\infty]$ for $t \geq 0$. We call a pair $(F,L)$ \emph{admissible}, whenever $H_t$ is not almost surely equal to the trivial process $H_t=\infty\,\cdot\,\I{\{t>0\}}$. Our interest in this semi-parametric family of stochastic processes is fueled both by theoretical and practical aspects. In the following Section~\ref{sec_prelim} we study distributional properties, whereas Sections~\ref{sec_Pickands} and \ref{sec_ID} give applications of the presented class of processes to the design of simulation algorithms. More precisely, the following list outlines the organization of the remaining article and the contributions made:
\begin{itemize}
\item \textbf{Section \ref{sec_prelim}:} It turns out that $H=\{H_t\}_{t \geq 0}$ is \emph{strongly infinitely divisible with respect to time (strong IDT)}\footnote{This property is called \emph{time-stable} in \cite{molchanov18}.}, meaning that for arbitrary $n \in \IN$ we have the distributional equality
\begin{gather*}
\{H_t\}_{t \geq 0} \stackrel{d}{=} \Big\{ \sum_{i=1}^{n}H^{(i)}_{\frac{t}{n}}\Big\}_{t \geq 0},
\end{gather*}
where $H^{(i)}$, $i=1,\ldots,n$, denote independent copies of $H$. In particular, from the viewpoint of the theory on infinite divisibility, the considered family of stochastic processes is a natural extension of the concept of a L\'evy subordinator. L\'evy subordinators arise in the special case when $F$ corresponds to a Bernoulli distribution with success probability $\exp(-1)$, see Example~\ref{example_LS}. Strong IDT processes have first been introduced in  \cite{mansuy05} and further examples have been studied in \cite{ouknine08,ouknine12}. These references give some examples of strong IDT processes with an emphasis on Gaussian processes. A LePage series representation for strong IDT processes without Gaussian component, in particular for non-decreasing strong IDT processes, is derived in \cite{molchanov18} and has been refined in the non-decreasing case by \cite{mai18}.
\par
We demonstrate how several distributional properties of $H$ can be inferred conveniently from the parameterizing pair $(F,L)$, thus pave the way to an analytical treatment of $H$ via its parameters. In particular, in addition to the defining integral representation we present an integration-by-parts formula and a canonical LePage series representation for $H$ in the spirit of \cite{molchanov18}. Section~\ref{sec_filt} studies the natural filtration $\{\mathcal{F}_t^{H}\}_{t \geq 0}$ of the process $H$. While L\'evy subordinators have independent increments, we demonstrate how the support of the probability measure $\mathrm{d}F$ controls the ability of $H$ to ``\textit{see into the future}.'' In particular, for $L$ a compound Poisson subordinator and support of $\mathrm{d}F$ bounded, the increment $H_{t+h}-H_t$ can be decomposed into a sum of one part that is measurable with respect to $\mathcal{F}_t^{H}$, and another part that is independent thereof. This peculiar property might be one explanation as to why strong IDT processes are, so far, not very well studied.
\item \textbf{Section \ref{sec_Pickands}:} Due to \cite[Theorem 5.3]{maischerer13}, a sequence $\{Y_k\}_{k \in \IN}$ of $(0,\infty)$-valued random samples drawn from the random distribution function $x \mapsto 1-\exp(-H_{x})$ is an \emph{exchangeable min-stable exponential sequence}, meaning that $\min\{Y_1/t_1,\ldots,Y_d/t_d\}$ has an exponential distribution with rate $\ell(t_1,\ldots,t_d,0,0,\ldots) \in [0,\infty)$, $d \in \IN$, and $t_1,\ldots,t_d \geq 0$ (not all zero). Equivalently, $\{1/Y_k\}_{k \geq 1}$ is an \emph{exchangeable max-stable sequence}. If $Y_1$ has unit mean, i.e.\ if $-\log\big(\IE[\exp\{-H_1\}]\big)=1$, the associated function $\ell : [0,\infty)^{\IN}_{00} \rightarrow [0,\infty)$ is called a \emph{stable tail dependence function}, defined on the set $[0,\infty)^{\IN}_{00}$ of non-negative sequences that are eventually zero. The stable tail dependence function uniquely characterizes the law of $\{Y_k\}_{k \in \IN}$ and, equivalently, the law of $H$. Since min- (resp.\ max-) stability is closely related to multivariate extreme-value theory, an understanding of the law of $H$ is thus tantamount with the understanding of an associated family of multivariate extreme-value copulas. In particular, a simulation algorithm for the random vector $(Y_1,\ldots,Y_d)$ is equivalent to one for the associated extreme-value copula. 
\par
Section~\ref{sec_Pickands} shows how the random vector $(Y_1,\ldots,Y_d)$ can be simulated exactly. To this end, we make use of a simulation algorithm from \cite{dombry16}, which requires to simulate from the so-called \emph{Pickands dependence measure} associated with $(Y_1,\ldots,Y_d)$, a finite measure on the $d$-dimensional unit simplex. In the present situation, we demonstrate how this simulation can be achieved efficiently and accurately.  
\item \textbf{Section~\ref{sec_ID}:} Fixing $t=1$, the random variable $H_1$ has an infinitely divisible law on $[0,\infty]$, which is invariant with respect to many changes in the parameterizing pair $(F,L)$. This fact can be used to derive different series representations for the same infinitely divisible law from Definition~(\ref{definition_H}), when either $L$ is of compound Poisson type or the support of $\mathrm{d}F$ is bounded. In spirit, this methodology is quite similar to seminal ideas in \cite{bondesson82}, who proposes alternative series representations for infinitely divisible laws on $\IR$. Section~\ref{sec_ID} demonstrates how the $(F,L)$-parameterization of $H_1$ provides a very convenient setting to derive a simulation algorithm for distributions from the so-called \emph{Bondesson class}, whenever the Stieltjes measure is given in a more convenient form than the L\'evy measure. In fact, if $L$ is chosen as a compound Poisson subordinator with unit exponential jumps, the definition of $H_1$ defines a bijection between the Bondesson class and distributions $F$ having finite mean and left-end point of support equal to zero.
\end{itemize}

Finally, Section~\ref{sec_conc} concludes and an appendix contains the technical proofs.

\section{Anatomy of the process $H$}\label{sec_prelim}
\subsection{Technical preliminaries}
Throughout, we denote by $L=\{L_t\}_{t \geq 0}$ a (possibly killed) L\'evy subordinator without drift and with L\'evy measure $\nu_L$ on $(0,\infty]$, i.e.\ with killing rate $\nu(\{\infty\})$. We assume that $\nu_L$ is non-zero, i.e.\ $L$ is not identically zero. Its associated Bernstein function is denoted by
\begin{gather*}
\Psi_L(x):=\int_{(0,\infty]}\big( 1-e^{-u\,x}\big)\,\nu_L(\mathrm{d}u),\quad x \geq 0,
\end{gather*}
implicitly using the short-hand notations $\exp(-\infty):=0$ and $0\,\cdot \infty := 0$ in order to enforce $\Psi(0)=0$.
\begin{remark}[Why driftless?]
A positive drift $\mu_L$ of the L\'{e}vy subordinator $L$ would imply a drift of the process $H$, namely
\begin{gather}
\mu_H=\mu_L\,\int_0^{\infty}-\log \{ F ( s) \} \,\mathrm{d}s.
\label{driftH}
\end{gather}
On the one hand, this is inconvenient, because it requires an additional integrability condition on $F$, so that (\ref{driftH}) exists at all. We will later postulate that $F$ satisfies $\int_0^{\infty}\big(1-F(s)\big)\,\mathrm{d}s<\infty$, which is a weaker condition. For instance, the distribution function $F(x)=\exp(-x^{-2})$ is admissible in the sequel but does not satisfy (\ref{driftH}). On the other hand, the assumption of a driftless L\'{e}vy subordinator is without loss of generality. To explain this, we will see that $H$ falls into the family of non-decreasing strong IDT processes. It follows from a structural result in \cite{mai18} that, just like for the subfamily of L\'{e}vy subordinators, such processes can be decomposed uniquely into $H_t = \mu_H\,t+\tilde{H}_t$, with a drift $\mu_H \geq 0$ and a non-decreasing strong IDT process $\tilde{H}$ without drift. This allows us to concentrate our study on the driftless case, because the more general case is simply obtained by adding a drift a posteriori.
\end{remark}

For later reference, we introduce the following sets of distribution functions (cdfs)
\begin{align*}
\F_c:&=\Big\{F \mbox{ cdf of some }X \in [0,\infty)\,:\,c=\IE[X]=\int_0^{\infty}\big(1-F(s)\big)\,\mathrm{d}s \Big\},\\
\F:&=\bigcup_{c \in (0,\infty)}\F_c.
\end{align*}
It is convenient to study the law of $H$ in terms of the function
\begin{gather*}
\ell(t_1,\ldots,t_d,0,0,\ldots):=-\log\Big(\IE\Big[ e^{-\sum_{k=1}^{d}H_{t_k}}\Big]\Big) 
\end{gather*}
for $d \in \IN$, $t_1,\ldots,t_d \geq 0$ arbitrary, and $F \in \F$.
\begin{lemma}[Laplace exponent of $H$]\label{STDF}
Let $d \in \IN$, $t_1,\ldots,t_d \geq 0$ arbitrary, and $F \in \F$. Then
\begin{align*}
\ell(t_1,\ldots,t_d,0,0,\ldots)= \int_{(0,\infty]}\int_0^{\infty}\Big(1-\prod_{k=1}^{d}F\big( \frac{s}{t_k}\big)^y\Big)\,\mathrm{d}s \,\nu_L(\mathrm{d}y).
\end{align*}
\end{lemma}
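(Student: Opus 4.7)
The plan is to reduce the claim to the exponential/Campbell formula for Laplace transforms of Riemann--Stieltjes integrals of deterministic non-negative integrands against a driftless L\'evy subordinator, and then simply unfold the Bernstein-function representation of $\Psi_L$.

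First I would consolidate the $d$ summands into a single stochastic integral by linearity of the Riemann--Stieltjes integral in the integrand:
\begin{gather*}
\sum_{k=1}^{d} H_{t_k} \;=\; \int_{0}^{\infty} g(s)\,\mathrm{d}L_s,\qquad g(s):=-\log\prod_{k=1}^{d}F\Big(\frac{s}{t_k}-\Big),
\end{gather*}
with the convention that a term drops out when $t_k=0$ (since then $F(s/t_k-)=F(\infty-)=1$), which matches the ``$0,0,\ldots$'' tail in the statement. The integrand $g$ is non-negative and Borel-measurable, so the pathwise integral is well defined in $[0,\infty]$.

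Next, I would invoke the classical exponential formula for L\'evy subordinators without drift: for any non-negative Borel function $g$,
\begin{gather*}
\IE\Big[\exp\Big(-\int_{0}^{\infty} g(s)\,\mathrm{d}L_s\Big)\Big] \;=\; \exp\Big(-\int_{0}^{\infty}\Psi_L\big(g(s)\big)\,\mathrm{d}s\Big).
\end{gather*}
This is immediate for simple step functions $g=\sum_j a_j\,\I{(u_{j-1},u_j]}$ by independence and stationarity of the increments of $L$ together with (\ref{LevyKhinchin}) with $\mu_L=0$; the general case follows by monotone approximation of $g$ from below by simple functions and two applications of monotone convergence (on the left-hand side after a minus sign and on the right inside the outer exponential), allowing both sides of the identity to equal zero if the integrals explode. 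Taking $-\log$ of this identity gives
\begin{gather*}
\ell(t_1,\ldots,t_d,0,0,\ldots) \;=\; \int_{0}^{\infty}\Psi_L\big(g(s)\big)\,\mathrm{d}s.
\end{gather*}

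Then I would substitute the Bernstein representation $\Psi_L(x)=\int_{(0,\infty]}(1-e^{-u\,x})\,\nu_L(\mathrm{d}u)$ with $x=g(s)$, noting that $e^{-u\,g(s)}=\prod_{k=1}^{d}F(s/t_k-)^u$, and apply Tonelli to swap the order of the (non-negative) integrals:
\begin{gather*}
\ell(t_1,\ldots,t_d,0,0,\ldots)=\int_{(0,\infty]}\int_{0}^{\infty}\Big(1-\prod_{k=1}^{d}F\big(\tfrac{s}{t_k}-\big)^{y}\Big)\,\mathrm{d}s\,\nu_L(\mathrm{d}y).
\end{gather*}
Finally, the difference between $F(s/t_k-)$ and $F(s/t_k)$ is supported on the at most countably many atoms of each $F(\cdot/t_k)$, hence on a Lebesgue-null set of $s$-values, so the inner $\mathrm{d}s$-integral is unchanged if we replace $F(s/t_k-)$ by $F(s/t_k)$, yielding the stated formula.

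The only delicate step is the exponential formula in its infinite-horizon form for a possibly non-finite integral; everything else is bookkeeping. I expect no further obstacle, in particular admissibility plays no role because both sides are allowed to equal $+\infty$ and the identity persists in $[0,\infty]$.
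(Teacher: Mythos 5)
Your proposal is correct and follows essentially the same route as the paper: both reduce to the Laplace functional of a driftless subordinator applied to the (deterministic, non-negative) integrand $g(s)=-\log\prod_{k}F(s/t_k-)$, justified by an approximation via step functions combined with the stationary-and-independent-increments property and the L\'evy--Khinchin formula, followed by Tonelli and the observation that the countably many jump points of $F$ are Lebesgue-null so that the left limit can be dropped. The only cosmetic difference is that you package the intermediate identity $\IE[\exp(-\int g\,\mathrm{d}L)]=\exp(-\int\Psi_L(g(s))\,\mathrm{d}s)$ as a standalone exponential formula proved by monotone approximation from below, whereas the paper works directly with explicit Riemann sums over the truncated interval $[1/R,R]$ and invokes bounded convergence; the underlying argument is the same.
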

\begin{proof}
See the Appendix.
\end{proof}
We view $\ell$ as a mapping from the set $[0,\infty)^{\IN}_{00}$ of sequences, which are eventually zero, to $[0,\infty]$. The substitution $u=s/t$ shows that $\ell$ is homogeneous of order one, that is 
\begin{gather*}
\ell(t\cdot t_1,t\cdot t_2,\ldots,t\cdot t_d,0,0,\ldots) = t\,\ell(t_1,t_2,\ldots,t_d,0,0,\ldots)
\end{gather*}
for $t,t_1,\ldots,t_d \geq 0$. In the case $d=1$ and $t_1=x$ this implies in particular that the random variable $H_x$ is infinitely divisible for each $x \geq 0$. The mapping $\ell$ specifies the law of $H$ uniquely, i.e.\ its finite-dimensional distributions. This is due to the fact that the law of the random vector $(H_{t_1},\ldots,H_{t_d})$ is uniquely determined by the values of its multivariate Laplace transform on $\IN^{d}$, which follows from the Stone--Weierstrass Theorem (polynomials are dense in the space of continuous functions on $[0,1]^d$). Consequently, the mapping $\ell$, which in turn by Lemma \ref{STDF} is specified by $F$ and $\nu_L$, is a convenient analytical description for the law of $H$. 
\par
Since $F$ is right-continuous, so is $t \mapsto F(s/t-)$ for each fixed $s$. This implies that $t \mapsto H_t$ is almost surely right-continuous as integral over right-continuous functions. The condition $\IE[X]>0$ in the definition of $\F$ implies $F(0)<1$, which results in $\lim_{t \rightarrow \infty}H_t=\infty$ a.s.. The condition $\IE[X]<\infty$ in the definition of $\mathcal{F}$ is necessary (but needs not be sufficient) to have $\IP(H_t<\infty)>0$ for $t>0$. In order to explain this, we recall \cite[Lemma 3]{mai17}, which is required several times later on. 
\begin{lemma}[Bernstein functions associated with $\F$]\label{BF_mai18}
For $F \in \F$ the function
\begin{gather}\label{Psi_F_A}
\Psi_F(x):=\int_0^{\infty}\big(1-F(s)^{x}\big)\,\mathrm{d}s,\quad x \geq 0,
\end{gather}
is a Bernstein function, and the mapping $F \mapsto \Psi_F$ is a bijection between $\F$ and the set of Bernstein functions without drift. The L\'evy measure $\nu_F$ associated with $\Psi_F$ is determined in terms of $F$ by the equation $\nu_F((t,\infty])=F^{-1}(\exp(-t))$, $t > 0$, where $F^{-1}$ denotes the generalized inverse of $F$. The inverse mapping $\nu \mapsto F_{\nu}$ from the set of L\'evy measures on $(0,\infty]$ to $\F$ is given by
\begin{gather*}
F_{\nu}(t)=\begin{cases}
0 &\mbox{, if }t<\nu(\{\infty\})\\
e^{-S_{\nu}^{-1}(t)} &\mbox{, if }\nu(\{\infty\})\leq t < \nu((0,\infty])\\
1 & \mbox{, else}
\end{cases},
\end{gather*}
where $S_{\nu}^{-1}$ denotes the generalized inverse of $S_{\nu}(t):=\nu((t,\infty])$. 
\end{lemma}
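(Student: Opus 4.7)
\emph{Plan.} My plan is to rewrite $\Psi_F$ as an integrated Laplace transform and match it against the canonical form of a driftless Bernstein function. First I would substitute $F(s)^{x} = e^{x\log F(s)}$ and invoke the elementary identity $1 - e^{-a} = \int_{0}^{a} e^{-u}\,\mathrm{d}u$ with $a = -x\log F(s)$; after one Fubini interchange this should produce
\begin{gather*}
\Psi_F(x) = x \int_{0}^{\infty} e^{-xt}\,\mathrm{Leb}\bigl\{s\geq 0 : F(s) < e^{-t}\bigr\}\,\mathrm{d}t.
\end{gather*}
Since $F$ is non-decreasing and right-continuous, $\{s : F(s)<y\} = [0,F^{-1}(y))$ for $y\in(0,1]$, where $F^{-1}$ is the generalized left-continuous inverse, so its Lebesgue measure equals $F^{-1}(y)$ and I would arrive at the clean representation $\Psi_F(x)=x\int_0^{\infty}e^{-xt}F^{-1}(e^{-t})\,\mathrm{d}t$.

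Next I would apply the same Fubini manipulation to a generic driftless Bernstein function to obtain $\int_{(0,\infty]}(1-e^{-ux})\,\nu(\mathrm{d}u)=x\int_0^{\infty}e^{-xt}\nu((t,\infty])\,\mathrm{d}t$. Comparing the two representations and using uniqueness of the Laplace transform for right-continuous non-increasing functions would identify $\nu_F((t,\infty])=F^{-1}(e^{-t})$ for $t>0$; non-negativity and right-continuity of this tail ensure that $\nu_F$ is a well-defined Borel measure on $(0,\infty]$, and the hypothesis $F\in\F$ gives $\Psi_F(1)=\IE[X]\in(0,\infty)$, which is equivalent to the integrability $\int(1\wedge u)\,\nu_F(\mathrm{d}u)<\infty$ since $1-e^{-u}$ and $1\wedge u$ are comparable near $0$ and $\infty$. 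This would certify that $\Psi_F$ is a genuine Bernstein function without drift with Lévy measure $\nu_F$.

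For the bijection and the explicit inverse formula, injectivity is immediate from the chain $\Psi_F\mapsto\nu_F\mapsto F^{-1}\mapsto F$, the last step using right-continuity of $F$. For surjectivity, starting from an arbitrary driftless Bernstein function with Lévy tail $S_\nu(t)=\nu((t,\infty])$, I would invert the relation $F^{-1}(y)=S_\nu(-\log y)$ via the standard equivalence $F^{-1}(y)\leq s \Leftrightarrow F(s)\geq y$ to write
\begin{gather*}
F(s)=\sup\bigl\{y\in[0,1]:S_\nu(-\log y)\leq s\bigr\}=\exp\bigl\{-S_\nu^{-1}(s)\bigr\},
\end{gather*}
with $S_\nu^{-1}$ the generalized inverse of the non-increasing right-continuous $S_\nu$. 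The main obstacle will be the bookkeeping for the two degenerate regimes that produce the piecewise statement of the lemma: the killing regime $\nu(\{\infty\})>0$, where $S_\nu(u)>s$ for every $u$ when $s<\nu(\{\infty\})$, forcing $S_\nu^{-1}(s)=\infty$ and $F(s)=0$; and the finite-measure regime $\nu((0,\infty])<\infty$, where $s\geq S_\nu(0)$ forces $S_\nu^{-1}(s)=0$ and $F(s)=1$. A direct check in both cases should verify the case-wise formula stated, and plugging $F$ back into $\Psi_F$ would close the surjectivity argument.
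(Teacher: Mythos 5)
The paper itself does not prove this lemma: the proof given in the appendix is simply a citation, ``See [Lemma 3] of Mai (2018a).'' So there is no internal argument to compare against, and a self-contained proof is a genuine addition.

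Your proposal is correct and complete. The central device --- writing $1-F(s)^x = \int_0^{-x\log F(s)}e^{-u}\,\mathrm{d}u$, applying Tonelli, and substituting $t=u/x$ to obtain $\Psi_F(x)=x\int_0^\infty e^{-xt}F^{-1}(e^{-t})\,\mathrm{d}t$, then doing the mirror-image manipulation on a generic driftless Bernstein function to get $\int_{(0,\infty]}(1-e^{-ux})\,\nu(\mathrm{d}u)=x\int_0^\infty e^{-xt}\,\nu((t,\infty])\,\mathrm{d}t$ --- is exactly the natural route, and matching the two Laplace transforms pins down $\nu_F((t,\infty])=F^{-1}(e^{-t})$. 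One detail you gesture at but should make explicit for a clean Laplace-uniqueness argument is that $t\mapsto F^{-1}(e^{-t})$ is itself right-continuous: the left-continuous inverse $F^{-1}$ composed with the continuous decreasing map $t\mapsto e^{-t}$ flips left- to right-continuity, which puts it in the same regularity class as the tail $t\mapsto \nu((t,\infty])$, so the identification holds pointwise and not merely a.e. The Lévy-measure integrability check via $\Psi_F(1)=\IE[X]\in(0,\infty)$ and the comparability of $1-e^{-u}$ with $1\wedge u$ on all of $(0,\infty]$ is the right way to see that $\nu_F$ is admissible. For surjectivity, your case analysis of $S_\nu^{-1}$ is correct: since $S_\nu$ is right-continuous non-increasing, $\{u:S_\nu(u)\le s\}$ is a closed half-line $[S_\nu^{-1}(s),\infty)$ (empty when $s<\nu(\{\infty\})$), which both yields right-continuity of $s\mapsto S_\nu^{-1}(s)$ (hence of $F_\nu$) and produces the three-case formula. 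The only thing I would insist you actually write out rather than wave at is the final ``plugging $F_\nu$ back in'' step: you should note that $\Psi_{F_\nu}(1)=\int_{(0,\infty]}(1-e^{-u})\,\nu(\mathrm{d}u)\in(0,\infty)$ certifies $F_\nu\in\F$, and that the first half of the argument applied to $F_\nu$ returns $\nu$ itself, closing the bijection.
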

\begin{proof}
See \cite[Lemma 3]{mai17}.
\end{proof}

Lemma \ref{BF_mai18} implies in particular that the integral $\int_0^{\infty}\big(1-F(s)^x\big)\,\mathrm{d}s$ is finite for all $x>0$ if and only if it is finite for a single fixed $x>0$. Now $\IP(H_t<\infty)>0$ is equivalent to $\ell(t,0,0,\ldots)<\infty$ and from Lemma~\ref{STDF} we see that this necessarily requires $\int_0^{\infty}\big(1-F(s)^y\big)\,\mathrm{d}s$ to be finite for those $y$ on which $\nu_L(\mathrm{d}y)$ puts mass, hence, in particular $\IE[X]=\int_0^{\infty}\big(1-F(s)\big)\,\mathrm{d}s<\infty$.
\par
While the condition $\IE[X]<\infty$ in the definition of $\F$ is necessary to prevent the non-interesting case $H_t=\infty\cdot\,\I{\{t>0\}}$, we have claimed that this this needs not be sufficient but depends on the specific choice of $L$ and $F$. To this end, we introduce the set
\begin{gather*}
\F_L:=\Big\{F \in \F\,:\,\ell(1,0,0,\ldots)=\int_{(0,\infty]}\Psi_F(y)\,\nu_L({d}y)<\infty \Big\}
\end{gather*}
of \emph{$L$-admissible} distribution functions. 

\begin{lemma}[Admissibility]\label{lemma_technicality}
The following statements are equivalent.
\begin{itemize}
\item[(a)] $(F,L)$ is admissible, i.e.\ $H_t \neq \infty\cdot \I{\{t>0\}}$.
\item[(b)] $F \in \F_L$, i.e.\ $\ell(1,0,0,\ldots)<\infty$.
\item[(c)] $\ell(t_1,\ldots,t_d,0,0,\ldots)<\infty$ for arbitrary $d \in \IN$ and $t_1,\ldots,t_d \geq 0$.
\end{itemize}
\end{lemma}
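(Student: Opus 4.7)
The plan is to establish the chain (c)$\Rightarrow$(b)$\Leftrightarrow$(a) and then (b)$\Rightarrow$(c), using Lemma~\ref{STDF} together with the homogeneity of $\ell$ and a standard telescoping inequality.

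First, (c)$\Rightarrow$(b) is immediate by setting $d=1$ and $t_1=1$. Second, I would argue (a)$\Leftrightarrow$(b) as follows. By Lemma~\ref{STDF} and the homogeneity of order one noted after its proof, one has $\ell(t,0,0,\ldots)=t\,\ell(1,0,0,\ldots)$ for all $t\geq 0$, and by definition $\IE[e^{-H_t}]=e^{-\ell(t,0,0,\ldots)}$. Hence (b) is equivalent to $\IP(H_t<\infty)>0$ for every $t>0$, which certainly implies admissibility. For the converse, I would use the monotonicity of the paths: if $(F,L)$ is admissible, then on an event of positive probability there exists some $t>0$ with $H_t<\infty$, and monotonicity gives $H_s<\infty$ for all $s\leq t$. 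A countable-union argument over rational $q>0$ then yields some $q>0$ with $\IP(H_q<\infty)>0$, hence $\ell(q,0,0,\ldots)<\infty$, and homogeneity transports this to $\ell(1,0,0,\ldots)<\infty$.

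Third, for (b)$\Rightarrow$(c), I would apply the elementary inequality $1-\prod_{k=1}^{d}x_k\leq \sum_{k=1}^{d}(1-x_k)$ valid for $x_k \in [0,1]$, with $x_k=F(s/t_k)^y$, inside the integral representation of Lemma~\ref{STDF}. This yields
\begin{gather*}
\ell(t_1,\ldots,t_d,0,0,\ldots) \leq \sum_{k=1}^{d}\int_{(0,\infty]}\int_0^{\infty}\big(1-F(s/t_k)^y\big)\,\mathrm{d}s\,\nu_L(\mathrm{d}y) = \sum_{k=1}^{d}\ell(t_k,0,0,\ldots),
\end{gather*}
and homogeneity together with (b) bounds the right-hand side by $\bigl(\sum_k t_k\bigr)\,\ell(1,0,0,\ldots)<\infty$.

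I do not expect a genuine obstacle: the only subtle point is the argument that admissibility, which is formulated as a pathwise non-triviality statement, forces $\IP(H_q<\infty)>0$ for some specific $q>0$. This is where the monotonicity and right-continuity of the paths, together with a countable-union reduction to rational times, need to be invoked explicitly; everything else reduces to the Laplace-transform identity $\IE[e^{-H_t}]=e^{-\ell(t,0,0,\ldots)}$, the homogeneity of $\ell$, and the product-inequality bound above.
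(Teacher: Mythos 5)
Your proof is correct, and the interesting part is that your argument for (b)$\Rightarrow$(c) follows a genuinely different route from the paper's. The paper treats (a)$\Leftrightarrow$(b) and (c)$\Rightarrow$(a),(b) as ``obvious'' and then proves (b)$\Rightarrow$(c) in two ways: probabilistically, by noting that monotone paths give $\{\sum_j H_{t_j}<\infty\}=\{H_{t_{[d]}}<\infty\}$ where $t_{[d]}=\max_k t_k$; or analytically, by bounding the integrand via $\prod_k F(s/t_k)^y\geq F(s/t_{[d]})^{dy}$ and then invoking concavity of $\Psi_F$ to get $\ell\leq d\,t_{[d]}\,\Psi_H(1)$. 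Your argument instead applies the Bonferroni-type inequality $1-\prod_k x_k\leq\sum_k(1-x_k)$ directly inside Lemma~\ref{STDF}, giving the subadditivity bound $\ell(t_1,\ldots,t_d,0,\ldots)\leq\sum_k\ell(t_k,0,\ldots)=\bigl(\sum_k t_k\bigr)\ell(1,0,\ldots)$. This bypasses the concavity lemma entirely and in fact yields a tighter estimate ($\sum_k t_k\leq d\,t_{[d]}$). For (a)$\Leftrightarrow$(b), you spell out the countable-union reduction over rational times that the paper leaves implicit; note that once you observe $\ell(t,0,\ldots)=t\,\ell(1,0,\ldots)$, the quantity $\IP(H_t<\infty)$ is either positive for all $t>0$ or for none, so the countable-union step mainly serves to justify that ``not a.s.\ the trivial process'' is equivalent to $\IP(H_t<\infty)>0$ for some $t$---which is exactly where monotonicity and the rational reduction are genuinely needed. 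Both proofs are complete; yours is arguably more elementary in the (b)$\Rightarrow$(c) step.
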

\begin{proof}
See the Appendix.
\end{proof}

\begin{example}[$\F_L$ in compound Poisson case]\label{ex_CPP}
Let $L$ be a (driftless) compound Poisson subordinator with intensity $\beta>0$ and jump size distribution $\IP(J \in \mathrm{d}y)$, where $J$ denotes a generic jump size random variable on $(0,\infty)$ with Laplace transform $\varphi_J$. It follows that $\Psi_L=\beta\,(1-\varphi_J)$ and
\begin{gather*}
\F_L = \Big\{ F\in \F\,:\,\IE\big[ \Psi_F(J)\big]<\infty\Big\}.
\end{gather*}
Since $\Psi_F(0)=0$, for $x \geq 1$ we obtain $\Psi_F(x) \leq x\,\Psi_F(1)$ by concavity of $\Psi_F$, which implies $\F_L=\F$ for those compound Poisson subordinators whose jump size distribution has finite mean.
\end{example}

As already mentioned, for each $t \geq 0$ the random variable $H_t$ is infinitely divisible. We denote the Bernstein function associated with $H_1$ by 
\begin{gather}
\Psi_H(x):=-\log\Big(\IE\Big[ e^{-x\,H_1}\Big]\Big)=\int_{(0,\infty]} \Psi_F(x\,y)\,\nu_L(\mathrm{d}y),\quad x \geq 0.
\label{anotherrepr_PsiH}
\end{gather}
The last equality follows for $x \in \IN$ from Lemma~\ref{STDF} in the special case $t_1=\ldots=t_d=1$ and for general $x \geq 0$ by the facts that (i) the right-hand side of (\ref{anotherrepr_PsiH}) defines a Bernstein function as mixture of Bernstein functions and (ii) a Bernstein function is uniquely determined by its values on $\IN$, see \cite[p.\ 36]{gnedin08}.
\par
Lemma~\ref{lemma_repr} derives two alternative stochastic representations of the process $\{H_t\}_{t \geq 0}$. The \textit{LePage representation} in part (b) is a particular special case of a result from \cite[Theorem 4.2]{molchanov18}.

\begin{lemma}[Alternative stochastic representations]\label{lemma_repr}
Let $(F,L)$ be an admissible pair, i.e.\ $F \in \F_L$.
\begin{itemize}
\item[(a)] \textbf{Integration-by-parts-formula:}
\begin{align*}
H_t=\int_0^\infty -\log\big(F(\frac{s}{t}-)\big)\,\mathrm{d}L_s=\int_0^\infty L_{s\,t} \,\mathrm{d}\big(\log(F(s))\big),\quad t>0.
\end{align*}
\item[(b)] \textbf{LePage series representation:}\\
Let $\{Z_k\}_{k \geq 1}$ be an iid sequence drawn from the probability measure $\frac{\Psi_F(z)}{\Psi_H(1)}\,\nu_L(\mathrm{d}z)$. Independently, let $\{\epsilon_k\}_{k \geq 1}$ be an iid sequence of unit exponential random variables. We then have the following equality in distribution:
\begin{gather*}
\{H_t\}_{t \geq 0} \stackrel{d}{=} \Big\{ -\sum_{k \geq 1} \log\Big[  F\Big( \frac{(\epsilon_1+\ldots+\epsilon_k)\,\Psi_F(Z_k)}{t\,\Psi_H(1)}-\Big)^{Z_k}\Big]\Big\}_{t \geq 0}.
\end{gather*}
\end{itemize}
\end{lemma}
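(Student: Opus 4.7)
The plan for part (a) is a pathwise change of variable followed by a Lebesgue--Stieltjes integration by parts. Substituting $u = s/t$ in the defining Riemann--Stieltjes integral rewrites $H_t = \int_0^{\infty} -\log F(u-)\,\mathrm{d}L_{ut}$, where $u \mapsto L_{ut}$ is right-continuous and non-decreasing. Since $u \mapsto \log F(u)$ is likewise right-continuous and non-decreasing, with $\log F(u) \uparrow 0$ as $u \to \infty$, the integration-by-parts identity for two right-continuous monotone functions $A = \log F$ and $B = L_{\cdot t}$, namely
\[
A(b)B(b) - A(a)B(a) = \int_{(a,b]} A(u-)\,\mathrm{d}B(u) + \int_{(a,b]} B(u)\,\mathrm{d}A(u),
\]
applies on every $[\epsilon, M] \subset (0,\infty)$. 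Passing to $\epsilon \downarrow 0$, $M \uparrow \infty$ and showing that the boundary contribution $[\log F(u)\,L_{ut}]_0^{\infty}$ vanishes then yields the claimed equality. Vanishing at $\infty$ is immediate from $\log F(\infty)=0$; at $0$ it is trivial when $F(0)>0$ since $L_0=0$, while when $F(0)=0$ it is forced by admissibility (Lemma~\ref{lemma_technicality}), which guarantees that both sides of the intended identity are a.s.\ finite, so that the boundary contribution must equal their difference, namely zero.

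The plan for part (b) is to first exhibit $H$ as a functional of a Poisson random measure, and then to reparametrize its atoms into the claimed i.i.d.\ form. Let $\Pi$ be a Poisson random measure on $(0,\infty)\times(0,\infty]$ with intensity $\mathrm{d}s \otimes \nu_L(\mathrm{d}y)$ and define $\tilde H_t := \sum_{(s,y) \in \Pi} -y\,\log F(s/t-)$. By the exponential formula for Poisson functionals,
\[
-\log \IE\Big[\exp\Big(-\sum_{k=1}^d \tilde H_{t_k}\Big)\Big] = \int_{(0,\infty]} \int_0^{\infty} \Big(1 - \prod_{k=1}^d F(s/t_k)^y\Big)\,\mathrm{d}s\,\nu_L(\mathrm{d}y),
\]
which equals $\ell(t_1,\ldots,t_d,0,0,\ldots)$ by Lemma~\ref{STDF}; hence $\tilde H \stackrel{d}{=} H$. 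By admissibility, $\Psi_H(1) = \int \Psi_F(z)\,\nu_L(\mathrm{d}z) \in (0,\infty)$, so $\mu(\mathrm{d}z) := \frac{\Psi_F(z)}{\Psi_H(1)}\,\nu_L(\mathrm{d}z)$ is a probability measure on $(0,\infty]$. With $\{Z_k\}$ i.i.d.\ from $\mu$ and $\Gamma_k = \epsilon_1+\cdots+\epsilon_k$ the arrival times of an independent unit-rate Poisson process, $\{(\Gamma_k, Z_k)\}_{k \geq 1}$ is a Poisson point process of intensity $\mathrm{d}\gamma \otimes \mu(\mathrm{d}z)$. The $z$-fiberwise dilation $(\gamma,z) \mapsto (\gamma\,\Psi_F(z)/\Psi_H(1),\,z)$ pushes this intensity forward onto $\mathrm{d}s \otimes \nu_L(\mathrm{d}y)$, so by the Poisson mapping theorem the image point process has the same law as $\Pi$. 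Substituting these atoms into $\tilde H_t$ produces the displayed series.

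The main obstacle is the boundary handling in (a), particularly when $F(0)=0$ or $\nu_L(\{\infty\})=0$; it is controlled by truncating the domain and using admissibility to force the boundary term to vanish. In (b) the Laplace-transform match via Lemma~\ref{STDF} and the intensity change of variable are essentially bookkeeping once the Poisson framework is in place.
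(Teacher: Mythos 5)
Your part (b) argument is essentially the paper's: identify $\{(\Gamma_k,Z_k)\}$ as a Poisson random measure with intensity $\mathrm{d}x \otimes (\Psi_F(z)/\Psi_H(1))\,\nu_L(\mathrm{d}z)$ (the paper cites Resnick, Prop.~3.8, you invoke the mapping theorem to reparametrize into $\mathrm{d}s\otimes\nu_L(\mathrm{d}y)$), then match Laplace functionals against Lemma~\ref{STDF}. That part is fine.

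For part (a) the overall shape (truncate to $[\epsilon,M]$, integrate by parts, send $\epsilon\downarrow 0$, $M\uparrow\infty$) matches the paper, but your treatment of the boundary terms has a genuine gap, and it is precisely where the paper invests the real work. At $M\uparrow\infty$ the boundary contribution is $\log F(M)\cdot L_{Mt}$; you dismiss it as ``immediate from $\log F(\infty)=0$,'' but $L_{Mt}\to\infty$ almost surely (the paper assumes $L$ is non-zero, and allows killing), so this is a $0\cdot\infty$ indeterminate and requires an argument. Similarly, at $\epsilon\downarrow 0$ when $F(0)=0$ one has $\log F(\epsilon)\to-\infty$ while $L_{\epsilon t}\to 0$, again $0\cdot\infty$. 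Your resolution for the latter --- ``both sides of the intended identity are a.s.\ finite, so the boundary contribution must equal their difference, namely zero'' --- is circular: finiteness of the two limiting integrals shows that the boundary term converges to \emph{some} finite limit, namely their difference, but the claim that this difference vanishes is exactly the identity you are trying to prove, not a consequence of admissibility alone. The paper closes this gap by observing that $-\log(F(x))\,L_x$ is infinitely divisible with Bernstein function $u\mapsto\int_0^{\infty}x\big(1-F(x)^{u\,t}\big)\,\nu_L(\mathrm{d}t)$, deriving the pointwise bound $x\big(1-F(x)^{ut}\big)\le \max\{u,1\}\,\Psi_F(t)$ to justify dominated convergence, and showing that this Bernstein function tends to $0$ as $x\to 0$ and as $x\to\infty$ (the latter via a dominated-convergence argument on $x\,\I{\{Y>x\}}\le Y$ for an auxiliary random variable $Y$ with cdf $F^{ut}$). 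This gives convergence of the boundary term to $0$ in probability, which together with the a.s.\ convergence of the truncated integrals (by monotone convergence) pins the a.s.\ limit at $0$. You would need to supply some version of this argument; as written, the boundary step does not go through.
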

\begin{proof}
See the Appendix.
\end{proof}

\begin{example}[The special case of a L\'evy subordinator]\label{example_LS}
Suppose that for $x\geq 0$, $F(x)=\exp(-1)+(1-\exp(-1))\,\I{[1,\infty)}(x)$ with associated Bernstein function
\begin{gather*}
\Psi_F(x)=\int_0^{\infty}\big(1-F(s)^x\big)\,\mathrm{d}s=1-e^{-x},\quad x\geq 0.
\end{gather*}
An arbitrary (driftless) L\'evy subordinator $L$ leads to an admissible pair $(F,L)$ and obviously we have $H=L$. So statement (b) in Lemma \ref{lemma_repr} provides an infinite series representation for an arbitrary L\'evy subordinator $L$, namely
\begin{gather}
\{L_t\}_{t \geq 0} = \Big\{\sum_{k \geq 1}Z_k\,\I{\big\{(\epsilon_1+\ldots+\epsilon_k)\,\frac{1-e^{-Z_k}}{\Psi_L(1)} \leq t\big\}} \Big\}_{t \geq 0}.
\label{levy_repr}
\end{gather}
In the special case when $L=N$ is a standard (unit intensity) Poisson process, this formula boils down to the well-known counting process representation
\begin{gather}\label{standardPoisson}
\{N_t\}_{t \geq 0} = \Big\{\sum_{k \geq 1}\I{\{\epsilon_1+\ldots+\epsilon_k\leq t\}} \Big\}_{t \geq 0}.
\end{gather}
Representation~(\ref{levy_repr}) is a quite natural generalization of (\ref{standardPoisson}), and by Lemma~\ref{lemma_repr}(b) it is general enough to comprise all L\'evy subordinators. In particular, it is worth mentioning that the probability law of the $Z_k$ is $(1-\exp(-z))\,\nu_L(\mathrm{d}z)/\Psi_L(1)$, which can be any probability law on $(0,\infty]$. This means that, conversely, if $\rho$ is an arbitrary probability law on $(0,\infty]$ and $\{Z_k\}_{k \geq 1}$ is an iid sequence drawn from $\rho$, (\ref{levy_repr}) defines a L\'evy subordinator $L$ without drift and with associated L\'evy measure $\nu_L(\mathrm{d}z)=(1-\exp(-z))^{-1}\,\rho(\mathrm{d}z)$. 
\end{example}

\subsection{Distributional properties}

Recall that an infinitely divisible distribution on $[0,\infty]$ is of compound Poisson type if its associated Bernstein function $\Psi$ is bounded, i.e.\ $\lim_{x \rightarrow \infty}\Psi(x)<\infty$. 

\begin{lemma}[When does $H_t$ have a compound Poisson distribution?] \label{lemma_CP}
Let $(F,L)$ be admissible. The Bernstein function $\Psi_H$ is bounded if and only if the following two conditions are satisfied:
\begin{itemize}
\item[(a)] $\Psi_{L}$ is bounded, i.e.\ $L$ is a compound Poisson subordinator.
\item[(b)] $\Psi_F$ is bounded, i.e.\ the random variable $X \sim F$ has bounded support.
\end{itemize}
\end{lemma}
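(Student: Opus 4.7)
The natural starting point is the representation
\begin{gather*}
\Psi_H(x) = \int_{(0,\infty]} \Psi_F(x\,y)\,\nu_L(\mathrm{d}y)
\end{gather*}
from (\ref{anotherrepr_PsiH}). Since $\Psi_F$ is a non-negative, non-decreasing Bernstein function, the integrand is monotone in $x$, so I apply monotone convergence to obtain
\begin{gather*}
\Psi_H(\infty) := \lim_{x \rightarrow \infty}\Psi_H(x) = \int_{(0,\infty]} \Psi_F(\infty)\,\nu_L(\mathrm{d}y) = \Psi_F(\infty)\cdot \nu_L((0,\infty]),
\end{gather*}
with the usual convention $0\cdot\infty := 0$.

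The next step is to rewrite each factor in terms of the conditions (a) and (b). On the one hand, $\Psi_L(\infty) = \int_{(0,\infty]}(1-e^{-uy}\cdot 0)\,\nu_L(\mathrm{d}y)$ evaluates (again by monotone convergence applied to the original Bernstein representation of $\Psi_L$) to $\nu_L((0,\infty])$, so (a) is equivalent to $\nu_L$ being a finite measure. On the other hand, I would record that $\Psi_F$ is bounded if and only if $X \sim F$ has bounded support: if $\text{ess sup }X = M < \infty$, then $F(s) = 1$ for $s \geq M$, and $\Psi_F(x) = \int_0^M(1-F(s)^x)\,\mathrm{d}s \leq M$; conversely, if $X$ has unbounded support, then $F(s) < 1$ for every $s \geq 0$, so $F(s)^x \downarrow 0$ pointwise as $x \to \infty$ and monotone convergence gives $\Psi_F(x) \to \int_0^{\infty}\mathrm{d}s = \infty$.

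It then remains to check the non-degeneracy of the two factors, which promotes the formula $\Psi_H(\infty) = \Psi_F(\infty)\cdot\nu_L((0,\infty])$ from a necessity to an equivalence. The factor $\nu_L((0,\infty])$ is strictly positive because $\nu_L$ is assumed non-zero, and the factor $\Psi_F(\infty)$ is strictly positive because $F \in \F$ implies $\IE[X] > 0$, which forces $F(s) < 1$ on a set of positive Lebesgue measure, whence $\Psi_F(x) > 0$ for every $x > 0$. Consequently the product $\Psi_F(\infty)\cdot \nu_L((0,\infty])$ is finite if and only if both factors are finite, which, by the previous paragraph, is precisely (a) and (b) combined.

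I do not expect any real obstacle: the proof is essentially two applications of monotone convergence together with the observation that neither factor degenerates under the standing hypotheses ($F \in \F_L \subset \F$ has positive mean, $\nu_L$ is non-zero). The only point needing a touch of care is the equivalence ``$\Psi_F$ bounded $\Leftrightarrow$ $\supp F$ bounded,'' which I would verify explicitly as above rather than cite, since it is short and makes the statement of (b) self-contained.
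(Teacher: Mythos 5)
Your proof is correct, and it takes a genuinely different and in fact cleaner route than the paper's. The paper works from the other mixture representation, $\Psi_H(x)=\int_0^{\infty}\Psi_L\bigl(-x\log F(s)\bigr)\,\mathrm{d}s$, and handles the two necessary conditions separately: it first lower-bounds $\Psi_H(u)\geq \epsilon\,\Psi_L(u\delta)$ near $s=0$ to force boundedness of $\Psi_L$, then, having written $\Psi_L=\beta(1-\varphi)$, invokes Fatou's lemma and $\varphi(u)\to 0$ to derive a contradiction unless $-\log F(s)=0$ beyond some finite $T$. Your approach instead starts from $\Psi_H(x)=\int_{(0,\infty]}\Psi_F(xy)\,\nu_L(\mathrm{d}y)$ and observes that the integrand is monotone in $x$, so a single application of monotone convergence collapses the whole question into the product identity $\Psi_H(\infty)=\Psi_F(\infty)\cdot\nu_L((0,\infty])$. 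Since $\nu_L((0,\infty])=\Psi_L(\infty)$ and both factors are strictly positive under the standing hypotheses ($\nu_L\neq 0$ and $\IE[X]>0$), finiteness of the product is equivalent to finiteness of each factor, and the equivalence follows in one stroke. What your route buys is symmetry and economy: the two conditions (a) and (b) are treated on equal footing, the auxiliary representation $\Psi_L=\beta(1-\varphi)$ and the Fatou argument are unnecessary, and the degenerate case $0\cdot\infty$ is dispatched by noting it never occurs. The paper's route, while longer, uses the representation that makes the sufficiency direction visually immediate (the outer integral truncates to $[0,u_F]$); yours makes both directions fall out of the same limit identity.
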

\begin{proof}
See the Appendix.
\end{proof}

Recall that an infinitely divisible law on $[0,\infty]$ is said to have killing, if it assigns positive mass to $\{\infty\}$, which is the case if and only if its associated L\'evy measure $\nu$ satisfies $\nu(\{\infty\})>0$. In terms of the associated Bernstein function $\Psi$, this means $\Psi(x)>\epsilon>0$ for all positive $x>0$. In this case, we also say that the Bernstein function $\Psi$ has killing.

\begin{lemma}[When does $H_t$ have a positive killing rate?] \label{lemma_killing}
Let $(F,L)$ be admissible. The Bernstein function $\Psi_H$ has killing if and only if at least one of the following two conditions is satisfied:
\begin{itemize}
\item[(a)] $\Psi_F$ has killing, i.e.\ the left end point of the support of $X \sim F$ is strictly positive.
\item[(b)] $\Psi_L$ has killing, i.e.\ $\nu_L(\{\infty\})>0$.
\end{itemize}
\end{lemma}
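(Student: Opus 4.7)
The plan is to characterise killing of $\Psi_H$ via the limit $\Psi_H(0+)$, exploiting the fact that a Bernstein function $\Psi$ has killing if and only if $\Psi(0+)>0$ (this follows from the representation $\Psi(x)=\int_{(0,\infty]}(1-e^{-u\,x})\,\nu(\mathrm{d}u)$, whose pointwise limit at $x=0+$ is $\nu(\{\infty\})$). Starting from the formula $\Psi_H(x)=\int_{(0,\infty]}\Psi_F(x\,y)\,\nu_L(\mathrm{d}y)$ in (\ref{anotherrepr_PsiH}), the key computation to carry out is
\begin{gather*}
\Psi_H(0+) \;=\; \Psi_F(0+)\,\nu_L\big((0,\infty)\big) \;+\; \Psi_F(\infty)\,\nu_L(\{\infty\}),
\end{gather*}
after which everything reduces to an elementary case analysis of the two summands.

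To justify this identity, I would pass $x\searrow 0$ inside the integral. For $y\in(0,\infty)$, monotonicity of the Bernstein function $\Psi_F$ yields $\Psi_F(x\,y)\searrow \Psi_F(0+)$; for $y=\infty$ and any $x>0$, the conventions $0\cdot\infty=0$ and $e^{-\infty}=0$ give $\Psi_F(x\,y)=\Psi_F(\infty)$. Admissibility $F\in\F_L$ supplies the dominating bound $\Psi_F(x\,y)\leq \Psi_F(y)$ for $x\leq 1$ together with $\int_{(0,\infty]}\Psi_F(y)\,\nu_L(\mathrm{d}y)=\Psi_H(1)<\infty$, so dominated convergence applies and produces the displayed formula.

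Next I would observe that $\Psi_F(\infty)>0$: this quantity equals the right endpoint of the support of $F$, which is strictly positive since $\IE[X]>0$ rules out $F$ being a Dirac at the origin. Hence the second summand is positive iff $\nu_L(\{\infty\})>0$, i.e.\ iff condition (b) holds, while the first summand is positive iff $\Psi_F(0+)>0$ (condition (a)) \emph{and} $\nu_L((0,\infty))>0$. The standing assumption $\nu_L\neq 0$ forces at least one of $\nu_L((0,\infty))$ and $\nu_L(\{\infty\})$ to be positive; this remark handles the potentially awkward scenario in which (a) holds but the first summand still vanishes, because then automatically (b) holds and the second summand is positive. Combining, $\Psi_H(0+)>0$ iff (a) or (b) holds, which is the claim. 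I expect this final case distinction (covering the possibility that $\nu_L$ concentrates all its mass on $\{\infty\}$) to be the only step requiring any care; the convergence argument is routine.
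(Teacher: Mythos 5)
Your proof is correct and follows essentially the same route as the paper's: both arguments reduce the question to determining whether $\Psi_H(0+)>0$ by passing the limit $x\searrow 0$ inside the integral $\int_{(0,\infty]}\Psi_F(x\,y)\,\nu_L(\mathrm{d}y)$, using admissibility to justify the interchange. Your version is slightly more streamlined: you derive the single explicit formula $\Psi_H(0+)=\Psi_F(0+)\,\nu_L\big((0,\infty)\big)+\Psi_F(\infty)\,\nu_L(\{\infty\})$ and settle both directions at once by a case analysis, whereas the paper treats sufficiency by ad hoc lower bounds and necessity by a separate limit argument. You also correctly invoke dominated convergence with the integrable majorant $\Psi_F(y)$ (the paper's appeal to ``bounded convergence'' in the necessity step is, strictly speaking, dominated convergence with the same majorant, since $\nu_F\times\nu_L$ need not be a finite measure), and you explicitly flag the edge case $\nu_L\big((0,\infty)\big)=0$, which the paper's phrasing passes over silently.
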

\begin{proof}
See the Appendix.
\end{proof}

Since we are only interested in a description of the probability law of $H=H^{(F,L)}$, it is helpful to briefly ponder on potential redundancies, i.e.\ to investigate the question: \textit{When do two different pairs $(F,L)\neq (\tilde{F},\tilde{L})$ lead to exactly the same probability law of the associated processes $H$?} To address this issue in a mathematically rigorous manner, we introduce the equivalence relation
\begin{gather*}
(F,L) \sim (\tilde{F},\tilde{L}) :\Leftrightarrow \mbox{ the law of }H^{(F,L)}\mbox{ equals that of }H^{(\tilde{F},\tilde{L})}.
\end{gather*}
The equivalence class of an admissible pair $(F,L)$ is denoted by $[F,L]$, or also by $[F,\Psi_L]$, in the sequel. Determining the equivalence class $[F,L]$ of a pair $(F,L)$ is surprisingly involved. It is not difficult, however, to see that for an admissible pair $(F,L)$ the equivalence class $[F(c_1\,.)^{c_2},c_1\,\Psi_L(./c_2)]$ is invariant with respect to $c_1,c_2>0$. Depending on the admissible pair, there can be even more redundancies, as the following example demonstrates.

\begin{example}[The curious case $F=$ Fr\'echet distribution]\label{ex_Gumbel}
With a parameter $\theta \in (0,1)$ and $c_{\theta}:=\Gamma(1-\theta)^{-1/\theta}$ consider the Fr\'echet distribution $F(x):=\exp(-c_{\theta}\,x^{-1/\theta})\I{\{x>0\}}$ and observe that $\Psi_{F}(x)=x^{\theta}$. For arbitrary $y>0$ it is not difficult to compute
\begin{gather*}
\int_0^{\infty}\Big(1-\prod_{k=1}^{d}F\Big(\frac{s}{t_k} \Big)^{y} \Big)\,\mathrm{d}s = y^{\theta}\,\Big(\sum_{k=1}^{d}t_k^{\frac{1}{\theta}} \Big)^{\theta}.
\end{gather*}
Let $L$ be a L\'evy subordinator without drift and such that $F \in \F_L$, i.e.\ such that $(F,L)$ is admissible. It follows that
\begin{gather*}
\ell(t_1,\ldots,t_d,0,0,\ldots)=\int_{(0,\infty]}y^{\theta}\,\nu_L(\mathrm{d}y)\,\Big(\sum_{k=1}^{d}t_k^{\frac{1}{\theta}} \Big)^{\theta}=\Psi_H(1)\,\Big(\sum_{k=1}^{d}t_k^{\frac{1}{\theta}} \Big)^{\theta}.
\end{gather*}
Consequently, the function $\ell$, hence the law of $H$, depends on the choice of $L$ only via the scalar $\Psi_H(1)$. In particular, in order to study the probability law of $H$ it is sufficient to choose one particular $L$. Choosing $L=N$, i.e.\ a standard Poisson process, we know\footnote{See, e.g., \cite{mai17}.} that $\{H_t\}_{t \geq 0} \stackrel{d}{=}\{M_{\theta}\,t^{1/\theta}\}_{t \geq 0}$ with a $\theta$-stable random variable $M_{\theta}$. In contrast to L\'evy subordinators, which have independent increments, this stochastic process looks peculiar at first glimpse. The whole path of the process is already known if one just observes $H_t$ for one $t>0$. This phenomenon is studied in more detail in Section~\ref{sec_filt}.
\end{example}

\subsection{The natural filtration of $H$} \label{sec_filt}
In this paragraph we investigate the amount of information one can obtain by observing the process $H$ up to some time $t>0$. The result is remarkable and different to most classes of stochastic processes commonly used. We begin with an auxiliary result that shows how much information about $L$ we can filter out of an observation of $H$.

\begin{lemma}[Filtering out $L$ from $H$]\label{lemma_filt}
Let $L$ be a (driftless) compound Poison subordinator with positive jump sizes. We define by $\mathcal{F}_t^H=\sigma(H_s:0\leq s\leq t)$ the information from observing $H$ up to time $t>0$, similarly we define and interpret $\mathcal{F}^L_t$. Moreover, we assume $u_F<\infty$, where $u_F$ denotes the right-end point of the support of $\mathrm{d}F$. Then
\begin{align}\label{Filtration}
\mathcal{F}^H_t=\mathcal{F}^L_{u_F\,t},\quad t\geq 0.
\end{align}
\end{lemma}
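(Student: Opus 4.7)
The plan is to establish the two inclusions of~(\ref{Filtration}) separately: the containment $\mathcal{F}^H_t \subseteq \mathcal{F}^L_{u_F t}$ is the routine direction, while $\mathcal{F}^L_{u_F t} \subseteq \mathcal{F}^H_t$ requires an iterative reconstruction of the jumps of $L$ from the observed path of $H$.

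For the easy inclusion, I would start from the defining integral $H_s = \int_0^\infty -\log\{F(u/s-)\}\,\mathrm{d}L_u$ and observe that, by the definition of $u_F$ together with the right-continuity of $F$, the integrand $u \mapsto -\log\{F(u/s-)\}$ is identically zero on $(u_F s,\infty)$. Consequently $H_s$ is a functional of the restricted path $(L_u)_{u \leq u_F s}$, hence $\mathcal{F}^L_{u_F s}$-measurable, and letting $s$ range over $[0,t]$ yields $\mathcal{F}^H_t \subseteq \mathcal{F}^L_{u_F t}$.

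For the converse $\mathcal{F}^L_{u_F t} \subseteq \mathcal{F}^H_t$, I would exploit that, since $L$ is compound Poisson with positive jumps, the restricted path $L|_{[0,u_F t]}$ is a.s.\ determined by its finite, time-ordered list of jumps $\{(T_k,J_k) : T_k \leq u_F t\}$, and I would recover this list iteratively from $H|_{[0,t]}$. Writing $H_s = \sum_{k \geq 1} -\log\{F(T_k/s-)\}\,J_k$, the first key observation is that $\tau_1 := T_1/u_F$ equals $\inf\{s > 0 : H_s > 0\}$ almost surely: for $s < \tau_1$ every summand vanishes because $T_k/s > u_F$, whereas for $s > \tau_1$ the first summand is strictly positive since $F(x) < 1$ for $x < u_F$. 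This makes $T_1$ an $\mathcal{F}^H_t$-measurable random variable. By local finiteness of the compound-Poisson jumps, there is a.s.\ a right-neighborhood of $\tau_1$ in which only the first summand contributes to $H_s$, so the ratio $H_s / (-\log\{F(T_1/s-)\})$ is eventually equal to $J_1$ and can be recovered as a right-limit at $\tau_1$. Subtracting the now-known contribution of the first jump from $H$ and iterating identifies $(\tau_k,T_k,J_k)$ in order; the procedure halts after finitely many steps, once $\tau_{k+1} > t$, and has then enumerated precisely those jumps of $L$ that fall in $[0, u_F t]$.

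The step I expect to be the main obstacle is the case in which $F$ has no atom at $u_F$, where $H$ is continuous at every $\tau_k$ and the entry times must be read off as infima of positivity sets rather than as observable jump times of $H$. The argument still goes through because $F(x-) < 1$ strictly for $x < u_F$, but one has to verify carefully that each operation used --- the infimum over a random open set, the subtraction of the reconstructed summands, and the right-limit defining $J_k$ --- preserves $\mathcal{F}^H_t$-measurability. A related subtlety is to rule out that $-\log\{F(T_k/s-)\}$ blows up to $+\infty$ in a right-neighborhood of $\tau_k$, which cannot happen under the admissibility of $(F,L)$ in view of Lemma~\ref{lemma_technicality}.
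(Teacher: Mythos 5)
Your proposal is correct and follows essentially the same route as the paper's proof: the easy inclusion from the integrand vanishing beyond $u_F s$, and the reverse inclusion via iterative reconstruction of the jump times and jump sizes of $L$ (entry time as the infimum of the positivity set, size as a right-limit of a ratio, then subtract and repeat until the next entry time exceeds $t$). You have, if anything, been more careful than the paper in flagging the $0/0$ right-limit when $F$ has no atom at $u_F$ and the measurability of each reconstruction step.
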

\begin{proof}
See the Appendix.
\end{proof}
Intuitively, Lemma~\ref{lemma_filt} means that observing $H$ up to $t>0$ allows us to anticipate the process $L$ up to time $t\,u_F$. In the sequel, we decompose the increments of $H$ into two parts. To this end, we assume $u_F<\infty$ and pick the L\'evy subordinator $L$ arbitrary. We then observe for $x>0$ using Lemma~\ref{lemma_repr}(a) that
\begin{align*}
&H_{t+x}-H_t=\int_0^{u_F}\big(L_{s\,(t+x)} -L_{{u_F}\,t}+L_{{u_F}\,t}-L_{s\,t}\big) \,\mathrm{d}\big(\log F(s)\big)\\
	&=\int_{\frac{{u_F}\,t}{t+x}}^{u_F} \big(L_{s\,(t+x)} -L_{{u_F}\,t}\big) \,\mathrm{d}\big(\log F(s)\big)+\int_0^\frac{{u_F}\,t}{t+x} \big(L_{s\,(t+x)} -L_{{u_F}\,t}\big) \,\mathrm{d}\big(\log F(s)\big)\\
	&\qquad +\int_0^{u_F} \big(L_{{u_F}\,t}-L_{s\,t}\big) \,\mathrm{d}\big(\log F(s)\big)\\
	&= \underbrace{\int_{{u_F}\,t}^{{u_F}\,(t+x)}\big(L_{v}-L_{{u_F}\,t}\big)\,\mathrm{d}\big(\log F(\frac{v}{t+x})\big)}_{\text{positive random variable, independent of }\mathcal{F}_{{u_F}\,t}^{L}}+\underbrace{\int_{0}^{{u_F}\,t}\big(L_{{u_F}\,t}-L_v\big)\,\mathrm{d}\big(\log \frac{F(v/t)}{F(v/(t+x))}\big)}_{\text{positive random variable,  }\mathcal{F}_{{u_F}\,t}^{L}\text{-measurable}}\\
	&=: X_1+X_2. 
\end{align*}
\begin{remark}[Situation of Lemma~\ref{lemma_filt}]
In the situation of Lemma~\ref{lemma_filt}, i.e.\ when $L$ is a (driftless) compound Poisson process, we have $\mathcal{F}_{{u_F}\,t}^{L}=\mathcal{F}_{t}^{H}$. This means that in such a situation, the increments $H_{t+x}-H_t$ of the process $H$ can be split up into a part $X_2$ that can be anticipated from observing the past and a part $X_1$ that is independent of the past. This is quite an astonishing property for a stochastic process and far off the ``usual'' property of L\'evy processes having independent increments. 
\end{remark}
We can continue to investigate $X_1$ and $X_2$ and find:
\begin{align*}
X_1&\stackrel{d}{=} \int_0^{u_F} L_{x\, y}\,\mathrm{d}\big(\log F(\frac{x\,y+{u_F}\,t}{t+x})\big)=-\int_0^{u_F} \log F(\frac{x\,y+{u_F}\,t}{x+t}-)\, \mathrm{d} L_{x\,y}\geq 0,\\
X_2&=\int_0^{{u_F}\,t}-\log F(\frac{v}{t+x}-)\,\mathrm{d} L_{v}-H_t\geq 0.
\end{align*}
The second expression for $X_1$ particularly shows that it is infinitely divisible with associated Bernstein function 
\begin{gather}\label{bernstein_psi1}
\Psi_1(\alpha)=-\log \IE\big[e^{-\alpha \, X_1}\big]=x\,\int_0^{u_F} \Psi_L(-\alpha\log F(\frac{x\,y+{u_F}\,t}{x+t}))\,\mathrm{d}y.
\end{gather}

\begin{example}[The case $-\log F(x)=(1-x)_{+}$]\label{Example_German1}
This example was first considered in \cite{bernhart15}. Investing it in the present context (note that $u_F=1$), we find
\begin{align*}
\Psi_1(\alpha)&= x\,\int_0^1 \Psi_L\big(\alpha(1-\frac{x\,y+t}{x+t})\big)\,\mathrm{d} y =x\,\int_0^1 \Psi_L\big(\alpha(1-y)\,\frac{x}{x+t}\big)\,\mathrm{d} y,\\
X_2&=\frac{x}{x+t}\,\big(L_t-H_t\big).
\end{align*}
The function $\Psi_1$ tends to zero as $t$ increases to infinity. This is intuitive in the sense that the larger $t$, the more we know about the increment $H_{t+x}-H_{t}$ for fixed $x>0$. 
\end{example}

Equation (\ref{Filtration}) suggests that for $F$ with unbounded support, i.e.\ $F(t)<1$ for all $t\geq 0$, the whole path of the L\'evy subordinator, and, hence $H$, is known already when observing the path of $H$ on $[0,t]$, for $t>0$. Indeed, the following two examples confirm this presumption that is later shown with Lemma~\ref{Gumbel_generalized}.

\begin{example}[The case $L=N$ and $F(x)=1-\exp(-x)$]\label{ex_galambos}
Let $L=N$ be a Poisson process with unit intensity, whose jump time sequence we denote by $\{\tau_k\}_{k \geq 1}$, and $F$ the distribution function of the unit exponential law. In particular, $\mathrm{d}F$ has unbounded support. One can show that $\mathcal{F}_t^{H}=\mathcal{F}_{\infty}^{H}$ for all $t>0$. In words, this means that the whole path of $H$ is determined completely by the path on $[0,t]$ for arbitrarily small $t>0$. To accomplish this, we show in the Appendix that the function
\begin{gather*}
z \mapsto H_z=\sum_{k \geq 1}-\log\Big( 1-e^{-\frac{\tau_k}{z}}\Big)
\end{gather*}
is holomorphic on $\mathbb{C}_+:=\{z \in \mathbb{C}\,:\,\mathcal{R}(z)>0\}$. Since holomorphic functions on $\mathbb{C}_+$ are determined everywhere, once they are determined on a small real interval, such as $(0,t) \subset \mathbb{C}_+$ for $t>0$, the claim follows. 
\end{example}

\begin{example}[The case $F(x)=\exp(-c_\theta\, x^{-1/\theta})$]\label{Example_Gumbel}
Let $M_\theta$ be a random variable with Laplace transform $x \mapsto \exp(- x^{\theta} )$, $\theta\in(0,1)$, then the associated IDT subordinator is given as $H_t=M_\theta\,t^{1/\theta}$, see Example~\ref{ex_Gumbel}. This is a peculiar stochastic process, as observing it at some $t>0$ corresponds to knowing it everywhere. We have furthermore seen in Example~\ref{ex_Gumbel} that the choice of L\'evy subordinator $L$ is arbitrary, provided admissibility. In particular, we are free to choose a compound Poisson process with unit intensity and unit exponentially distributed jumps, i.e.\ $\Psi_L(x)=x/(x+1)$, which is a convenient choice for the following considerations. We truncate $F$ via $F_n(x):=\I{\{x\geq n\}}+\I{\{x< n\}}\,F(x)$. Clearly, we then have $u_{F_n}=n<\infty$ and $F_n$ converges to $F$ pointwise. We find for $X_2$ and the Bernstein function of $X_1$, see (\ref{bernstein_psi1}),
\begin{align*}
\Psi^{(n)}_1(\alpha)&= x\,\int_0^n\frac{c_\theta\,(x+t)^{1/\theta}}{c_\theta \,(x+t)^{1/\theta}+(x\,y+n\,t)^{1/\theta}}\,\mathrm{d} y \longrightarrow 0, \qquad (n\rightarrow\infty),\\
X_2&=c_\theta\big((t+x)^{1/\theta}-t^{1/\theta}\big)\,\int_0^{n\,t}v^{-1/\theta}\,\mathrm{d}\, L_v \longrightarrow H_{t+x}-H_t, \qquad (n\rightarrow\infty),
\end{align*}
both observations confirming our knowledge about $H_t$.
\end{example}
\begin{lemma}[The case of unbounded support of $\mathrm{d}F$]\label{Gumbel_generalized}
Like in Lemma \ref{lemma_filt}, we assume that $L$ is a compound Poisson subordinator, but now let $\mathrm{d}F$ have unbounded support, i.e.\ $u_F=\infty$. Then for arbitrary $t>0$ we have
\begin{align*}
\mathcal{F}^H_t=\mathcal{F}^L_{\infty}=\mathcal{F}^H_{\infty}.
\end{align*}
\end{lemma}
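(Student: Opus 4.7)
The plan is to extend the truncation procedure illustrated in Example \ref{Example_Gumbel} to an arbitrary admissible pair $(F,L)$ with $u_F = \infty$. The inclusions $\mathcal{F}^H_t \subseteq \mathcal{F}^H_\infty \subseteq \mathcal{F}^L_\infty$ hold by construction, so the content lies in $\mathcal{F}^L_\infty \subseteq \mathcal{F}^H_t$. To that end, I would define the truncations $F_n(x) := F(x)\,\I{\{x < n\}} + \I{\{x \geq n\}}$ with $u_{F_n} = n$, and set $H^{(n)} := H^{(F_n, L)}$. Lemma \ref{lemma_filt} gives $\mathcal{F}^{H^{(n)}}_t = \mathcal{F}^L_{nt}$, so $\bigvee_n \mathcal{F}^L_{nt} = \mathcal{F}^L_\infty$, and the task reduces to showing $\mathcal{F}^{H^{(n)}}_t \subseteq \mathcal{F}^H_t$ for every $n$.

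Applying to $(F_n,L)$ the additive decomposition $H^{(n)}_{t+x} - H^{(n)}_t = X_1^{(n)} + X_2^{(n)}$ derived immediately before Example \ref{Example_German1}, the summand $X_2^{(n)}$ is $\mathcal{F}^L_{nt}$-measurable and $X_1^{(n)}$ is independent of $\mathcal{F}^L_{nt}$ with Bernstein function
\begin{align*}
\Psi_1^{(n)}(\alpha) = (x+t)\int_{nt/(x+t)}^{n} \Psi_L\bigl(-\alpha \log F(z)\bigr)\,\mathrm{d}z
\end{align*}
after the substitution $z = (xy+nt)/(x+t)$ in (\ref{bernstein_psi1}). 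The analytic heart of the proof is the dominated-convergence estimate $\Psi_1^{(n)}(\alpha) \to 0$ as $n \to \infty$: because $u_F = \infty$ forces $F(z)\to 1$, the integrand vanishes pointwise; meanwhile the concavity bound $\Psi_L(u) \leq \Psi_L'(0+)\,u$ for compound Poisson $\Psi_L$ with finite-mean jumps combined with $-\log F(z) \leq (1-F(z))/F(z)$ on the tail majorises the integrand by a constant multiple of $1-F(z)$, whose integrability follows from the admissibility hypothesis $\int_0^\infty(1-F(z))\,\mathrm{d}z = \IE[X] < \infty$, and whose tail over $[nt/(x+t),n]$ vanishes as $n\to\infty$.

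Once $X_1^{(n)}\to 0$ in probability, the companion $X_2^{(n)} = \int_0^{nt}\log\bigl[F(v/t-)/F(v/(t+x)-)\bigr]\,\mathrm{d}L_v$ converges to $H_{t+x}-H_t$, placing that increment in $\bigvee_n \mathcal{F}^{H^{(n)}}_t$. Sweeping $x$ over $(0,\infty)$ captures the full future of $H$, and combining with the identification $\mathcal{F}^{H^{(n)}}_t \subseteq \mathcal{F}^H_t$ closes the chain $\mathcal{F}^H_t = \mathcal{F}^L_\infty = \mathcal{F}^H_\infty$. The hardest step, I expect, is precisely this last identification: $H^{(n)}$ is not a pathwise functional of $H$ in any obvious closed form, so rather than reconstructing $H^{(n)}$ explicitly from $H$ one has to justify the inclusion via a limiting-functional argument, starting from the pathwise identity $H^{(n)}_s = H_s - \int_{(ns,\infty)}-\log F(u/s-)\,\mathrm{d}L_u$ and using the established vanishing of its tail in probability to feed measurability back into $\mathcal{F}^H_t$.
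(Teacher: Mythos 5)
Your proposal follows the paper's route essentially step for step: truncate $F$ to $F_n(x)=F(x)\,\I{\{x<n\}}+\I{\{x\geq n\}}$, apply Lemma~\ref{lemma_filt} to the pair $(F_n,L)$ to get $\mathcal{F}^{H^{(n)}}_t=\mathcal{F}^L_{nt}$, use the decomposition $H^{(n)}_{t+x}-H^{(n)}_t=X_1^{(n)}+X_2^{(n)}$ and show $\Psi_1^{(n)}(\alpha)\to0$, then let $X_2^{(n)}\to H_{t+x}-H_t$. The substitution $z=(xy+nt)/(x+t)$ and the resulting integral $(x+t)\int_{nt/(x+t)}^n\Psi_L(-\alpha\log F(z))\,\mathrm{d}z$ match the paper exactly.

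There is, however, a genuine gap in the domination step. You bound $\Psi_L(u)\leq\Psi_L'(0+)\,u$ and explicitly assume finite-mean jumps, but that is not a hypothesis of the lemma: there are admissible pairs with $u_F=\infty$ and $L$ compound Poisson with infinite-mean jumps. For instance, take $F$ Fr\'echet with $\theta=1/2$ so $\Psi_F(x)=\sqrt{x}$ and $\IE[X]<\infty$, and $L$ compound Poisson whose jump law $J$ has $\IE[\sqrt{J}]<\infty$ but $\IE[J]=\infty$; then $(F,L)$ is admissible but $\Psi_L'(0+)=\infty$ and your majorant is identically $+\infty$. The paper's bound is different and avoids this: it uses that $s\mapsto\int_{(0,\infty]}(1-F(s)^{\alpha z})\,\nu_L(\mathrm{d}z)$ is non-increasing, bounds the integral over $[s_0,n]$ with $s_0=nt/(t+x)$ by its length times the value at the left endpoint, obtains a constant multiple of $s_0\int(1-F(s_0)^{\alpha z})\,\nu_L(\mathrm{d}z)$, and then invokes~(\ref{usefulinequality3}), which was established under admissibility alone. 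A side remark: $\int_0^\infty(1-F(z))\,\mathrm{d}z<\infty$ is merely $F\in\F$, not the admissibility hypothesis $F\in\F_L$, which is strictly stronger in general. Your reading of the final step (passing from $X_2^{(n)}$ being $\mathcal{F}^{H^{(n)}}_t$-measurable to $H_{t+x}-H_t$ being $\mathcal{F}^H_t$-measurable requires $\mathcal{F}^{H^{(n)}}_t\subseteq\mathcal{F}^H_t$) is a fair one, and the paper's proof indeed does not spell it out; note though that the sketch you offer, from $H^{(n)}_s=H_s-R^{(n)}_s$ with $R^{(n)}_s\to0$, only recovers $H_s$ as a limit of the $H^{(m)}_s$, which is the opposite inclusion, so that step would still need genuine work.
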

\begin{proof}
See the Appendix.
\end{proof}

\section{Application 1: Simulation of extreme-value copulas}\label{sec_Pickands}
Throughout this section, we fix one admissible pair $(F,L)$. The fact that $\ell$ in Lemma~\ref{STDF} is homogeneous of order $1$ implies by virtue of \cite[Theorem 5.3]{maischerer13} that the infinite exchangeable sequence of random variables
\begin{gather*}
Y_k:=\inf\{t>0\,:\,H_t>\epsilon_k\},\quad k \in \IN,
\end{gather*}
with $\{\epsilon_k\}_{k \in \IN}$ independent unit exponentials, independent of $H$, is \emph{min-stable multivariate exponential} with survival function given by
\begin{gather}
\IP(Y_1>t_1,\ldots,Y_d>t_d) = e^{-\ell(t_1,\ldots,t_d,0,0,\ldots)},\quad t_1,\ldots,t_d \geq 0,
\label{definitionY}
\end{gather}
which means that $\min\{Y_1/t_1,\ldots,Y_d/t_d\}$ has a univariate exponential distribution with rate $\ell(t_1,\ldots,t_d,0,0,\ldots)$, not all $t_k$ equal to zero. The exponential rate of each component $Y_k$ equals $\Psi_H(1)$ and it is convenient to normalize it to $\Psi_H(1)=1$, which then implies that the $d$-variate function $\ell_d(t_1,\ldots,t_d):= \ell(t_1,\ldots,t_d,0,0,\ldots)$ defines a so-called \emph{stable tail dependence function} in dimension $d$. If for $(F,L)$ the law of $H=H^{(F,L)}$ does not satisfy $\Psi_H(1)=1$, we can always change from $F$ to $F(c\,.)$ for some $c>0$ such that $H^{(F(c\,.),L)}$ has this property. This $c$ is even unique, i.e.\ for any given $(F,L)$ there is a unique $c>0$ such that $H^{(F(c\,.),L)}$ satisfies $\Psi_H(1)=1$. Given this normalization, the function $C_d(u_1,\ldots,u_d):=\exp\big\{-\ell_d\big(-\log(u_1),\ldots,-\log(u_d)\big)\big\}$, $u_1,\ldots,u_d \in (0,1)$, defines a so-called \emph{extreme-value copula}. For background on the latter, the reader is referred to \cite{Joe97,nelsen06,gudendorf09}. Loosely speaking, extreme-value copulas are the dependence structures behind the limit of appropriately normalized componentwise maxima of independent and identically distributed random vectors, which is of paramount interest in multivariate extreme-value theory. The relationship between strong IDT subordinators and multivariate extreme-value theory has already been investigated in the present authors' references \cite{maischerer13,bernhart15,mai17,mai18}. On one hand, the probability space (\ref{definitionY}) can directly be used to simulate the random vector $\mathbf{U}:=\exp(-\mathbf{Y}) \sim C_d$, where $\mathbf{Y}=(Y_1,\ldots,Y_d)$. However, due to the infinite series representation of $H$ in Lemma~\ref{lemma_repr}(b) and the fact that the increments of $H$ are typically not independent, this is a non-trivial task in general, although feasible in particular cases, an example with $L$ a Poisson process and support of $\mathrm{d}F$ bounded is provided in \cite[Section 3.1]{mai17}. However, there is an alternative approach to accomplish the simulation in the general case, as described in the sequel.
\par
It is well-known from \cite{dehaan77,ressel13} that the stable tail-dependence function $\ell_d$ is uniquely associated with a probability measure on the unit simplex $S_d:=\{(q_1,\ldots,q_d) \in [0,1]^d\,:\,q_1+\ldots+q_d=1\}$ subject to the constraint that each component has expected value $1/d$. To wit, there exists a random vector $\mathbf{Q}=(Q_1,\ldots,Q_d)$, uniquely determined in law, taking values in $S_d$ and satisfying $\IE[Q_k]=1/d$ for each $k=1,\ldots,d$, such that
\begin{gather*}
\ell_d(t_1,\ldots,t_d) = d\,\IE[\max\{t_1\,Q_1,\ldots,t_d\,Q_d\}],\quad t_1,\ldots,t_d \geq 0,
\end{gather*} 
which is called the \emph{Pickands representation} of $\ell_d$, named after \cite{pickands81}. It is important to notice that $\mathbf{Q}=\mathbf{Q}^{(d)}$ depends on the dimension $d$. In particular, in our situation where $d$ is arbitrary the first $d$ components of $\mathbf{Q}^{(d+1)}$ are not equal in distribution to $\mathbf{Q}^{(d)}$, not even when re-scaled. In order to simplify notation, however, we omit to highlight this dependence on $d$ for the rest of this paragraph.
\par
The simulation algorithm in \cite[Algorithm 1]{dombry16}, based on a seminal idea by \cite{schlather02}, shows how to simulate a random vector $\mathbf{U}\sim C_d$ exactly and efficiently, if one has at hand a simulation algorithm for the vector $\mathbf{Q}$. More precisely, it is shown that
\begin{align}
\mathbf{U} &\stackrel{d}{=} \Big( \exp\Big\{-\frac{1}{Z_1^{(M)}}\Big\},\ldots,\exp\Big\{-\frac{1}{Z^{(M)}_d}\Big\}\Big), \label{schlather}\\
\big( Z_1^{(n)},\ldots,Z^{(n)}_d\big) &\stackrel{d}{=} \Big( \max_{k=1,\ldots,n}\Big\{ \frac{d\,Q_1^{(k)}}{\epsilon_1+\ldots+\epsilon_k}\Big\},\ldots,\max_{k=1,\ldots,n}\Big\{ \frac{d\,Q_d^{(k)}}{\epsilon_1+\ldots+\epsilon_k}\Big\}\Big),\quad n\geq 1,\nonumber
\end{align}
where $\{\mathbf{Q}^{(k)}\}_{k \geq 1}$ denote independent copies of $\mathbf{Q}$, independently of $\{\epsilon_k\}_{k \geq 1}$ iid unit exponentials, and $M$ equals the smallest $n \in \IN$ for which $d/(\epsilon_1+\ldots+\epsilon_{n+1})$ is smaller than the minimal component of $\big( Z_1^{(n)},\ldots,Z^{(n)}_d\big)$. Thus, deriving an efficient and exact simulation algorithm for $\mathbf{Q}$ is essentially the key to deriving an efficient and exact simulation algorithm for the extreme-value copula $C_d$ associated with $H$ in dimension $d \geq 2$. The purpose of the present section is to demonstrate how this is possible. Concluding, one concrete application of the stochastic processes considered in the present article is to enlarge the repertoire of extreme-value copulas for which exact and efficient simulation strategies are available.

\subsection{The Pickands representation of $\ell_d$}
We assume that $\Psi_H(1)=1$ so that $\ell_d$ is a proper stable tail dependence function. This condition implies that
\begin{gather*}
\int_{(0,\infty]}\Psi_F(z)\,\nu_L(\mathrm{d}z) = 1,
\end{gather*}
so that $\Psi_F(z)\,\nu_L(\mathrm{d}z)$ is a probability measure on $(0,\infty]$. Notice that this probability measure has already been important in Lemma \ref{lemma_repr}(b). In the sequel, we show that it also occupies a commanding role when determining the Pickands representation of $\ell_d$. Denoting by $Z$ a generic random variable drawn from this probability law, observe that
\begin{align*}
\ell_d(t_1,\ldots,t_d) &=\int_{(0,\infty]}\iint_{[0,\infty)^d}\max\{t_1\,x_1,\ldots,t_d\,x_d\}\,\prod_{i=1}^{d}\mathrm{d}F^{y}(x_i) \,\nu_L(\mathrm{d}y)\\
& = \IE\Bigg[ \frac{1}{\Psi_F(Z)}\,\iint_{[0,\infty)^d}\max\{t_1\,x_1,\ldots,t_d\,x_d\}\,\prod_{i=1}^{d}\mathrm{d}F^{Z}(x_i) \Bigg]\\
& = d\,\IE\Bigg[ \iint_{[0,\infty)^d}\max_{i=1,\ldots,d}\Big\{t_i\,\frac{x_i}{\sum_{k=1}^{d}x_k}\Big\}\,\frac{1}{d}\,\sum_{k=1}^{d}\frac{x_k\,\mathrm{d}F^{Z}(x_k)}{\Psi_F(Z)}\,\prod_{i\neq k}^{d}\mathrm{d}F^{Z}(x_k)\Bigg].
\end{align*}
The important observation from this computation is that the vector
\begin{gather*}
(q_1,\ldots,q_d):=\Big(\frac{x_1}{\sum_{k=1}^{d}x_k},\ldots,\frac{x_d}{\sum_{k=1}^{d}x_k}\Big)
\end{gather*}
takes values in $S_d$ and, conditioned on $Z$, the measure $\frac{1}{d}\,\sum_{k=1}^{d}\frac{x_k\,\mathrm{d}F^{Z}(x_k)}{\Psi_F(Z)}\,\prod_{i\neq k}^{d}\mathrm{d}F^{Z}(x_k)$ is a probability measure on $[0,\infty)^d$. To see this, notice that conditioned on $Z$, the measure $\frac{x\,\mathrm{d}F^{Z}(x)}{\Psi_F(Z)}$ is a probability measure on $[0,\infty)$, since
\begin{gather*}
\int_{[0,\infty)}\frac{x\,\mathrm{d}F^{Z}(x)}{\Psi_F(Z)} = \frac{\int_0^{\infty}\big(1-F(s)^{Z}\big)\,\mathrm{d}s}{\Psi_F(Z)} = 1.
\end{gather*}
Consequently, we have found the unique Pickands dependence measure, as summarized in the following lemma.
\begin{lemma}[Pickands representation of $\ell_d$]\label{lemma_Pickands}
A random sample from ${\textbf Q}$ can be drawn according to the following algorithm.
\begin{enumerate}[label=(\roman*)]
\item Draw $D$ uniformly distributed on $\{1,\ldots,d\}$.
\item Draw a sample of the random variable $Z\sim \Psi_F(Z)\,\nu_L(\mathrm{d}z)$. 
\item Draw $d$ independent and identically distributed random variables $X_1,\ldots,X_d$ from the distribution function $F^{Z}$. 
\item Draw a random variable $M$ from the probability law $x\,\mathrm{d}F^{Z}(x)/\Psi_F(z)$.
\item Compute the random vector $(W_1,\ldots,W_d)$, defined by
\begin{gather*}
W_k:=\begin{cases}
X_k &\mbox{, if }k=D\\
M & \mbox{, else}\\
\end{cases}.
\end{gather*}
\item Return
\begin{gather*}
{\textbf Q}:=\Big( \frac{W_1}{\sum_{k=1}^{d}W_k},\ldots,\frac{W_d}{\sum_{k=1}^{d}W_k}\Big).
\end{gather*}
\end{enumerate} 
\end{lemma}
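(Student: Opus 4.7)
The plan is to verify that the random vector $\mathbf{Q}$ produced by the algorithm reproduces the Pickands representation of $\ell_d$ derived in the display immediately preceding the lemma statement. By uniqueness of the Pickands dependence measure (the reference to \cite{dehaan77,ressel13} cited in the section), matching $d\,\IE[\max_i t_i\,Q_i] = \ell_d(t_1,\ldots,t_d)$ for all non-negative $t_i$ together with the simplex constraint $\mathbf{Q} \in S_d$ and the normalization $\IE[Q_k] = 1/d$ pins down the law of $\mathbf{Q}$.

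First I would reinterpret the derivation as a hierarchy of conditional samplings. The outer expectation is over $Z$ drawn from the probability measure $\Psi_F(z)\,\nu_L(\mathrm{d}z)$ on $(0,\infty]$. Conditional on $Z$, the inner integrand is against the uniform $1/d$-mixture, over $k \in \{1,\ldots,d\}$, of the product measures whose $k$-th factor is the size-biased law $x_k\,\mathrm{d}F^Z(x_k)/\Psi_F(Z)$ and whose remaining $d-1$ factors are plain $\mathrm{d}F^Z$. This mixture structure is exactly what steps (i)--(v) implement: the index $D$ realises the uniform mixing variable, $Z$ realises the outer randomness, $M$ realises the size-biased draw in the distinguished coordinate, and the $X_k$ are the iid $F^Z$-draws populating the remaining coordinates. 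Step (vi) then forms $\mathbf{Q}$ by projecting the composite vector radially onto $S_d$, which matches the integrand $\max_i\{t_i\,x_i/\textstyle\sum_k x_k\}$ appearing in the derivation.

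Second, I would justify that the size-biased measure in step (iv) is in fact a probability measure on $[0,\infty)$ for almost every realization of $Z$. Writing $Y \sim F^Z$, this is the identity
\begin{align*}
\int_{[0,\infty)} x\,\mathrm{d}F^Z(x) = \IE[Y] = \int_0^{\infty}\big(1-F(s)^Z\big)\,\mathrm{d}s = \Psi_F(Z),
\end{align*}
which combines the tail-integral formula for non-negative random variables with the definition of $\Psi_F$ from Lemma~\ref{BF_mai18}. By admissibility $\Psi_F(Z)$ is finite $\Psi_F(z)\,\nu_L(\mathrm{d}z)$-almost surely, so the normalization is well posed.

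Finally, I would confirm the remaining Pickands constraint $\IE[Q_k] = 1/d$. The construction is symmetric in the $d$ coordinates because $D$ is uniform on $\{1,\ldots,d\}$ and, conditional on $D = k$, the $k$-th coordinate is size-biased while the other $d-1$ are exchangeable. Hence the unconditional law of $(W_1,\ldots,W_d)$ is exchangeable, $\mathbf{Q}$ is exchangeable on $S_d$, and $\sum_{k=1}^d Q_k = 1$ forces $\IE[Q_k] = 1/d$ automatically. The main obstacle is purely bookkeeping in step one: one must carefully identify which factor in the mixture density is the size-biased one and confirm that the projection to the simplex does not disturb the mixture structure; the analytical heavy lifting has already been completed in the display preceding the lemma.
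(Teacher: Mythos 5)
Your proposal is correct and follows the paper's own route: the computation displayed immediately before the lemma \emph{is} the paper's proof, and your probabilistic reading of it---outer mixing over $Z \sim \Psi_F(z)\,\nu_L(\mathrm{d}z)$, uniform mixing over the index $D$, a single size-biased draw in the distinguished coordinate, iid draws from $F^Z$ elsewhere, then radial projection onto $S_d$---is the intended one, and the exchangeability check of $\IE[Q_k]=1/d$ is a welcome addition that the paper leaves implicit.

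One thing worth flagging explicitly: step (v) of the lemma, as printed, does not say what you and the derivation both want it to say. As written, $W_D := X_D$ (a plain $F^Z$ draw) and $W_k := M$ for every $k \neq D$, so $d-1$ coordinates of $(W_1,\ldots,W_d)$ would all equal the \emph{same} size-biased $M$. That is neither what the mixture measure $\frac{1}{d}\sum_{k=1}^{d}\frac{x_k\,\mathrm{d}F^{Z}(x_k)}{\Psi_F(Z)}\prod_{i\neq k}\mathrm{d}F^{Z}(x_i)$ produces nor what Example~\ref{Example_Levy_case} does; there the assignment is $W_D = M$ and $W_k = X_k$ for $k\neq D$. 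Your proof is proving this corrected version, and it would be cleaner to say so rather than silently adopt it. Relatedly, the product in the paper's display should read $\prod_{i\neq k}\mathrm{d}F^{Z}(x_i)$, not $\prod_{i\neq k}\mathrm{d}F^{Z}(x_k)$.
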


\begin{remark}[Expected runtime of the algorithm in Lemma~\ref{lemma_Pickands}]
When using the stochastic representation (\ref{schlather}) together with Lemma~\ref{lemma_Pickands} to simulate the extreme-value copula $C_d$, the runtime of the simulation algorithm is random itself. However, \cite[Proposition~4]{dombry16} shows that the expected value of $M$ in (\ref{schlather})  equals $d\,\IE[\max\{Y_1,\ldots,Y_d\}]$, which in the present situation can be computed in closed form by
\begin{align*}
\IE[M]&=d\,\IE[\max\{Y_1,\ldots,Y_d\}] = d\,\IE\Big[\int_0^{\infty}\Big(1-\big( 1-e^{-H_t}\big)^d\Big)\,\mathrm{d}t\Big]\\
& = -d\,\sum_{k=1}^{d}\binom{d}{k}\,(-1)^k\,\int_0^{\infty}\IE\Big[ e^{-k\,H_t}\Big]\,\mathrm{d}t =-d\,\sum_{k=1}^{d}\binom{d}{k}\,(-1)^k\,\int_0^{\infty}e^{-t\,\Psi_H(k)}\,\mathrm{d}t\\
& = -d\,\sum_{k=1}^{d}\binom{d}{k}\,\frac{(-1)^k}{\Psi_H(k)} \leq d^2.
\end{align*}
The last estimate follows from the estimate
\begin{gather*}
\IE\Big[\int_0^{\infty}\Big(1-\big( 1-e^{-H_t}\big)^d\Big)\,\mathrm{d}t\Big] = \IE\big[\Psi_{1-e^{-H_.}}(d)\big] \leq \underbrace{\IE[\Psi_{1-e^{-H}}(1)]}_{=1\text{ by normalization}}d,
\end{gather*}
where the inequality follows from the argument of the proof of Lemma~\ref{lemma_technicality}. Since the simulation of $\mathbf{Q}$ in Lemma~\ref{lemma_Pickands} itself is apparently of linear order in the dimension $d$, the total expected runtime for the exact simulation of the extreme-value copula $C_d$ according to the representation (\ref{schlather}) with the help of Lemma~\ref{lemma_Pickands} has expected order between $d^2$ and $d^3$ and can be computed explicitly in terms of the Bernstein function $\Psi_H$. 
\end{remark}

In the sequel, we work out some concrete examples, demonstrating the versatility of Lemma~\ref{lemma_Pickands}.
\subsection{Examples}
\begin{example}[The L\'evy subordinator case]\label{Example_Levy_case}
Consider the distribution function $F(x)=\exp(-1)+(1-\exp(-1))\,\I{[1,\infty)}(x)$, for $x\geq 0$, with associated Bernstein function
\begin{gather*}
\Psi_F(x)=1-e^{-x},\quad x\geq 0.
\end{gather*}
An arbitrary L\'evy subordinator $L$ leads to an admissible pair $(F,L)$ and obviously $H=L$. Furthermore, $\Psi_H(1)=1$ is satisfied whenever $\Psi_L(1)=1$. It is well-known that $C_d$ equals the survival copula of a so-called \emph{Marshall--Olkin distribution}, named after \cite{marshall67}, see \cite{maischerer09,mai11}. Furthermore, it is observed for $z>0$ that a random variable $X \sim F^z$ is Bernoulli-distributed with success probability $1-\exp(-z)$. In order to simulate from the Pickands measure, additionally required is also a simulation algorithm for $M \sim x\,\mathrm{d}F^{z}(x)/\Psi_F(z)$ with given $z$. With $X$ a Bernoulli random variable with success probability $1-\exp(-z)$, it is observed that
\begin{gather*}
\IP(M \leq x) = \int_{0}^{x}\frac{s}{\Psi_F(z)}\,\mathrm{d}F^{z}(s) = \frac{1}{1-e^{-z}}\,\IE[\I{\{X \leq x\}}\,X]=\I{\{x \geq 1\}},
\end{gather*} 
hence $M \equiv 1$. Summarizing, the random vector ${\textbf Q}=(Q_1,\ldots,Q_d)$ can be simulated as follows:
\begin{itemize}
\item[(i)] Draw the random variable $Z\sim(1-\exp(-z))\,\nu_L(\mathrm{d}z)$.
\item[(ii)] Simulate $X_1,\ldots,X_d$ iid Bernoulli variables with success probability $1-\exp(-Z)$.
\item[(iii)] Draw a random variable $D$ which is uniformly distributed on $\{1,\ldots,d\}$.
\item[(iv)] Compute the random vector $(W_1,\ldots,W_d)$ as
\begin{gather*}
W_k:=\begin{cases}
X_k & \mbox{, if }k \neq D\\
1 & \mbox{, if }k=D\\
\end{cases}.
\end{gather*}
\item[(v)] Return ${\textbf Q}$, where $Q_k:=W_k/\big(\sum_{i=1}^{d}W_i\big)$, for $k=1,\ldots,d$.
\end{itemize}
\end{example}

\begin{example}[The case $L=N$]\label{example_standardPP2}
In the special case when $L=N$ is a standard Poisson process, one observes that $Z \equiv 1$ and Lemma~\ref{lemma_Pickands} boils down to a result in \cite[Lemma 4]{mai17}. It can thus be viewed as a generalization thereof.
\end{example}

\begin{example}[The case $F(x)=\min\{x,1\}$]\label{Example_German2}
The Pickands function $\ell_d$ is studied in \cite[Theorem~2]{bernhart15}. 
However, no simulation algorithm for $\mathbf{Q}$ has been found in that reference, a gap which we now fill. It is observed that
\begin{gather*}
\Psi_F(y) = \int_0^{1}\big(1-s^{y}\big)\,\mathrm{d}s = \frac{y}{y+1},\quad y \geq 0,
\end{gather*}
is the Bernstein function associated with a compound Poisson subordinator with unit exponential jumps and unit intensity. Hence, any $L$ leads to an admissible pair $(F,L)$ by Lemma \ref{lemma_simpleadmissible} below. In order to ensure $\Psi_H(1)=1$, $\nu_L$ must be normalized such that $\int_{(0,\infty]}z/(z+1)\,\nu_L(\mathrm{d}z)=1$. Second, for any fixed $z>0$ the distribution function $F(x)^z=\min\{x^z,1\}$ is trivial to simulate from via the inversion method, see \cite[p.\ 234]{maischerer17}. Furthermore, the distribution function of the random variable $M \sim x\,\mathrm{d}F^z(x)/\Psi_F(y)$ is given by $x \mapsto \min\{x^{z+1},1\}$, which is also easy to simulate from by the inversion method. Consequently, the simulation algorithm in Lemma~\ref{lemma_Pickands} is straightforward to implement, whenever the L\'evy subordinator $L$ is chosen such that the probability law of $Z$, that is 
\begin{gather*}
\IP(Z \in \mathrm{d}z) = \frac{z}{z+1}\,\nu_L(\mathrm{d}z),
\end{gather*}
can be simulated from. 
\end{example}

\begin{example}[The case $F(x)=\min\{\exp(x-1),1\}$]\label{example_anotherbernhart}
The Pickands function $\ell_d$ is computed in closed form in \cite[Theorem 1]{bernhart15}. It is given by 
\begin{align*}
\ell_d(x_1,\ldots,x_d)=\frac{d\,\Psi_H(d)}{\sum_{j=1}^d x_{[j]}^{-1}}-\sum_{i=1}^{d-1}\Big(\frac{d-i+1}{\sum_{j=i}^dx_{[j]}^{-1}}-\frac{d-i}{\sum_{j=i+1}^dx_{[j]}^{-1}}\Big)\,\Psi_H\big(d-i-\sum_{j=i+1}^d\frac{x_{[i]}}{x_{[j]}}\big),
\end{align*}
where $x_{[1]}\leq\ldots\leq x_{[d]}$ is the ordered list of $x_1,\ldots,x_d$. However, no simulation algorithm for $\mathbf{Q}$ has been found in that reference, a gap which we now fill. It is observed that
\begin{gather*}
\Psi_F(y) = \int_0^{1}\big(1-e^{y\,s-y}\big)\,\mathrm{d}s = 1-\frac{1-e^{-y}}{y},\quad y \geq 0,
\end{gather*}
is the Bernstein function associated with a compound Poisson subordinator with unit intensity and jumps that are uniformly distributed on $[0,1]$. Hence, any $L$ leads to an admissible pair $(F,L)$ by Lemma~\ref{lemma_simpleadmissible} below and \cite[Lemma~3]{bernhart15} shows that the Bernstein function $\Psi_H$ runs through all possible Bernstein functions when $L$ is varied. In order to ensure $\Psi_H(1)=1$, $\nu_L$ must be normalized such that $\int_{(0,\infty]}\big(1-(1-e^{-z})/z\big)\,\nu_L(\mathrm{d}z)=1$. Second, for any fixed $z>0$ the distribution function $F(x)^z=\min\{\exp(z\,x-z),1\}$ is easy to simulate from via the inversion method. Furthermore, the density $f_M$ of the random variable $M \sim x\,\mathrm{d}F^z(x)/\Psi_F(z)$ is computed to be
\begin{gather*}
f_M(x) = \frac{z^2\,e^{-z}}{z+e^{-z}-1}\,x\,e^{x\,z}\,\I{(0,1)}(x).
\end{gather*}
This density is bounded, so rejection-acceptance sampling with the uniform law on $[0,1]$ can be implemented to achieve an exact simulation scheme of $M$, see \cite[p.\ 235]{maischerer17}. Consequently, the simulation algorithm in Lemma~\ref{lemma_Pickands} is straightforward to implement, whenever the L\'evy subordinator $L$ is chosen such that the probability law of $Z$, that is 
\begin{gather*}
\IP(Z \in \mathrm{d}z) = \Big(1-\frac{1-e^{-z}}{z}\Big)\,\nu_L(\mathrm{d}z),
\end{gather*}
 can be simulated from. 
\end{example}

Figure~\ref{figtikz} schematically visualizes those admissible pairs $(F,L)$ for which either former literature or the present article provides knowledge about the associated extreme-value copula $C_d$. 

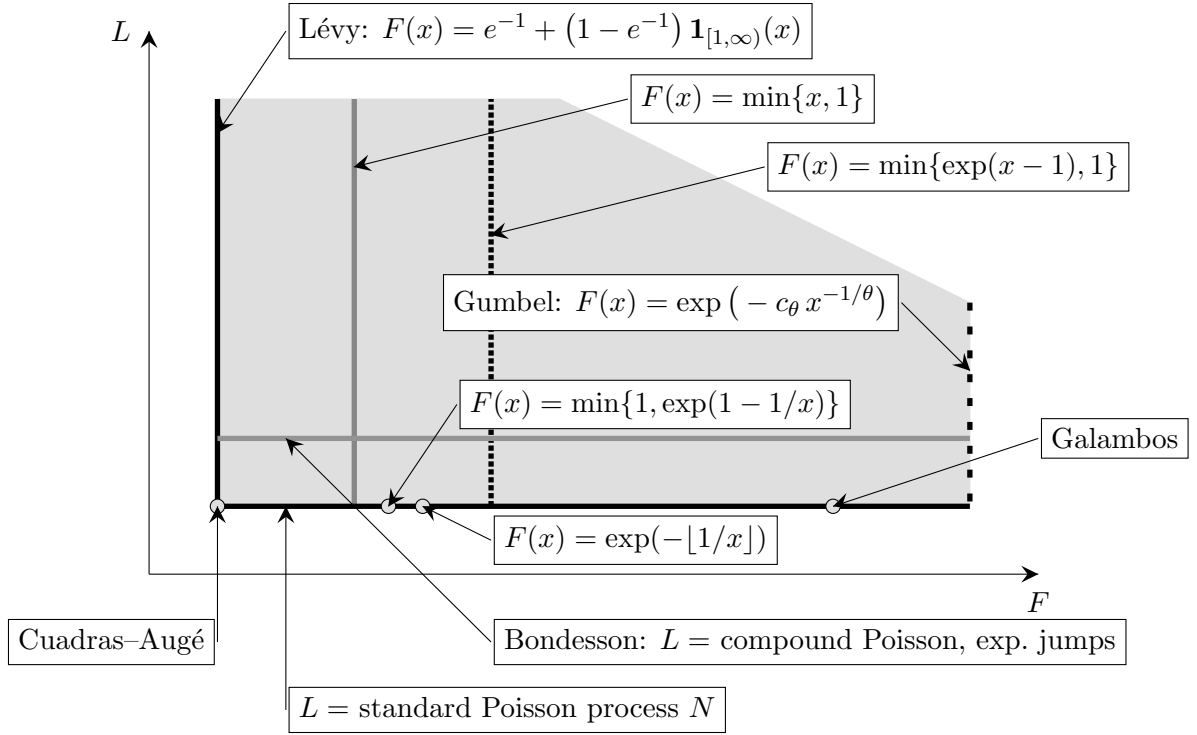
\begin{figure}[!ht]
\begin{tikzpicture}[scale=0.9, snake=zigzag]

\draw[directed] (0,0) -- (0,8);
\draw (0,8) node[left=3pt] {$ L $};
\draw (13,0) node[below=3pt] {$ F $};
\draw[directed] (0,0) -- (13,0);

\draw[gray!25, fill=gray!25] (1,1) -- (1,7) -- (6,7) -- (12,4) -- (12,1) -- cycle  ;

\draw[line width=0.7mm,black] (1,1) -- (1,7);
\draw[directed] (2,8) -- (1,6.5);
\draw (2,8) node[right=1pt,draw,rectangle] {L\'evy: $F(x)=e^{-1}+\big(1-e^{-1}\big)\,\I{[1,\infty)}(x)$};

\draw[line width=0.7mm,gray!95] (3,1) -- (3,7);
\draw[directed] (7,7) -- (3,6);
\draw (7,7) node[right=1pt,draw,rectangle] {$F(x)=\min\{x,1\}$};

\draw[line width=0.7mm,black, style=densely dotted] (5,1) -- (5,7);
\draw[directed] (9,6) -- (5,5);
\draw (9,6) node[right=1pt,draw,rectangle] {$F(x)=\min\{\exp(x-1),1\}$};

\draw[line width=0.7mm,gray!85] (1,2) -- (12,2);
\draw[directed] (5,-1) -- (2,2);
\draw (5,-1) node[right=1pt,draw,rectangle] {Bondesson: $L=$ compound Poisson, exp.\ jumps};

\draw[line width=0.7mm,black] (1,1) -- (12,1);
\draw[directed] (2,-2) -- (2,1);
\draw (2,-2) node[right=1pt,draw,rectangle] {$L=$ standard Poisson process $N$};

\draw[line width=0.7mm,black, loosely dashed] (12,4) -- (12,1);
\draw[directed] (11,4) -- (12,3);
\draw (11,4) node[left=1pt,draw,rectangle,fill=white] {Gumbel: $F(x)=\exp\big(-c_{\theta}\,x^{-{1/\theta}}\big)$};

\draw[fill=gray!25] (10,1) circle (3pt);
\draw[directed] (13,2) -- (10,1);
\draw (13,2) node[right=1pt,draw,rectangle] {Galambos};

\draw[fill=gray!25] (1,1) circle (3pt);
\draw[directed] (1,-1) -- (1,1);
\draw (1,-1) node[left=1pt,draw,rectangle] {Cuadras--Aug\'e};

\draw[fill=gray!25] (4,1) circle (3pt);
\draw[directed] (5,0.5) -- (4,1);
\draw (5,0.5) node[right=1pt,draw,rectangle] {$F(x)=\exp(-\lfloor{1/x}\rfloor)$};

\draw[fill=gray!25] (3.5,1) circle (3pt);
\draw[directed] (4.5,2.5) -- (3.5,1);
\draw (4.5,2.5) node[right=1pt,draw,rectangle,fill=white] {$F(x) = \min\{1,\exp(1-1/x)\}$};



\end{tikzpicture}
\caption{Illustration of well-understood pairs $(F,L)$. Vertical and horizontal lines of the same greyscale and line-type combination are complementary in the sense of the distributional equality (\ref{interestingequaldist}) below. The L\'evy case is covered in Example~\ref{Example_Levy_case} and originates from \cite{maischerer09}. The case $F(x)=\min\{x,1\}$ relates to Example~\ref{Example_German2}, see \cite{bernhart15} for details. The case $F(x)=\min\{\exp(x-1),1\}$ relates to Example~\ref{example_anotherbernhart} and \cite{bernhart15}. The Gumbel case is covered in Examples~\ref{ex_Gumbel} and \ref{Example_Gumbel}, see also \cite{mai17}, the name stems from \cite{gumbel60,gumbel61}. The Galambos and the Cuadras--Aug\'e copulas are named after \cite{galambos75} and \cite{cuadras81}, respectively. The case of a standard Poisson process is discussed in Example~\ref{example_standardPP2} and \cite{mai17}. If, additionally, $F(x)=\exp(-\lfloor{1/x}\rfloor)$ or $F(x) = \min\{1,\exp(1-1/x)\}$, this yields \cite[Example 5.1]{molchanov18} or \cite[Example 4.6]{molchanov18}, respectively. The Bondesson family relates to Lemma~\ref{lemma_bondesson}.}
\label{figtikz}
\end{figure}

\section{Application 2: Series representations for infinitely divisible laws}\label{sec_ID}
We recall from Equations~(\ref{Psi_F_A}) and (\ref{anotherrepr_PsiH}), using Lemma~\ref{BF_mai18}, that the law of the random variable $H_1$ is infinitely divisible on $[0,\infty]$ with associated Bernstein function
\begin{gather}\label{Psi_H_B}
\Psi_H(x) = \int_{(0,\infty]}\int_{(0,\infty]}\big( 1-e^{-x\,s\,u}\big) \,\nu_F(\mathrm{d}s)\,\nu_L(\mathrm{d}u),\quad x \geq 0.
\end{gather} 
This double integral representation indicates that the roles of $F$ and $L$ can be switched without changing the one-dimensional marginal distribution of $H_1$. More precisely,
\begin{gather}
\int_{0}^{\infty}-\log\big( F(s-)\big)\,\mathrm{d}L_s \stackrel{d}{=} \int_{0}^{\infty}-\log\big( F^{(L)}(s-)\big)\,\mathrm{d}L^{(F)}_s,
\label{interestingequaldist}
\end{gather}
where for a given L\'evy subordinator $L$ the function $F^{(L)} \in \F$ is uniquely determined by the equality $\Psi_{F^{(L)}}=\Psi_L$, and for a given $F\in \F$ the L\'evy subordinator $L^{(F)}$ is uniquely determined by the equality $\Psi_{L^{(F)}}=\Psi_F$. Recall that the bijection between the set of L\'evy measures on $(0,\infty]$ and $\F$ is explicitly stated in Lemma~\ref{BF_mai18}. 
\par
Since admissibility by definition means that the law of $H_1$ is non-trivial, one consequence of this ``duality'' is that $(F,L)$ is admissible if and only if $(F^{(L)},L^{(F)})$ is admissible, which implies the following simple admissibility criterion, that could alternatively also be proved directly.

\begin{lemma}[Simple admissibility criterion]\label{lemma_simpleadmissible}
Let $F \in \F$ and $L$ a L\'evy subordinator without drift. If $\Psi_F$ is bounded and satisfies $\lim_{x \downarrow 0}\Psi_F^{'}(x)<\infty$, the pair $(F,L)$ is admissible, no matter how $L$ is chosen. 
\end{lemma}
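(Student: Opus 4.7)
My plan is to reduce admissibility to the integrability condition from Lemma~\ref{lemma_technicality}(b), namely that
\begin{gather*}
\int_{(0,\infty]}\Psi_F(y)\,\nu_L(\mathrm{d}y)<\infty,
\end{gather*}
and then split this integral into the near-zero part $(0,1]$ and the tail part $(1,\infty]$, estimating each factor of the integrand separately.

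For the tail part, boundedness of $\Psi_F$ immediately gives $\Psi_F(y)\leq M$ for some constant $M<\infty$. Combined with the elementary fact that any L\'evy measure $\nu_L$ of a subordinator satisfies $\nu_L((1,\infty])<\infty$ (which follows from $\int_{(0,\infty]}\min\{y,1\}\,\nu_L(\mathrm{d}y)<\infty$ since $\min\{y,1\}=1$ on $(1,\infty]$), one obtains $\int_{(1,\infty]}\Psi_F(y)\,\nu_L(\mathrm{d}y)\leq M\,\nu_L((1,\infty])<\infty$.

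For the near-zero part, I would exploit the key structural property of Bernstein functions: since $\Psi_F$ is concave on $[0,\infty)$ with $\Psi_F(0)=0$, the difference quotient $y\mapsto \Psi_F(y)/y$ is non-increasing, and hence bounded above by $\Psi_F'(0+):=\lim_{x\downarrow 0}\Psi_F'(x)$ on all of $(0,\infty)$. By hypothesis this limit is finite, so $\Psi_F(y)\leq \Psi_F'(0+)\,y$ for every $y>0$. Thus
\begin{gather*}
\int_{(0,1]}\Psi_F(y)\,\nu_L(\mathrm{d}y)\leq \Psi_F'(0+)\int_{(0,1]}y\,\nu_L(\mathrm{d}y)<\infty,
\end{gather*}
again by the defining integrability property of $\nu_L$.

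Combining the two bounds yields $\int_{(0,\infty]}\Psi_F(y)\,\nu_L(\mathrm{d}y)<\infty$, so $F\in\F_L$ and hence $(F,L)$ is admissible by Lemma~\ref{lemma_technicality}. There is no real obstacle here; the only small subtlety is verifying that $\Psi_F'(0+)<\infty$ indeed controls $\Psi_F$ linearly on all of $(0,\infty)$, which is a standard consequence of concavity rather than just a local statement near zero.
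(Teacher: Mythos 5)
Your proof is correct, but it takes a genuinely different and more elementary route than the paper. You establish $\int_{(0,\infty]}\Psi_F(y)\,\nu_L(\mathrm{d}y)<\infty$ directly by a two-sided estimate: near zero the concavity of the Bernstein function $\Psi_F$ together with $\Psi_F(0)=0$ gives $\Psi_F(y)\le\Psi_F'(0+)\,y$, which is integrable against $\nu_L$ on $(0,1]$, while on $(1,\infty]$ boundedness of $\Psi_F$ pairs with $\nu_L((1,\infty])<\infty$. The paper instead invokes the duality (\ref{interestingequaldist}): the two hypotheses on $\Psi_F$ are exactly the conditions for $\Psi_F$ to be the Bernstein function of a compound Poisson subordinator with finite-mean jumps, so by Example~\ref{ex_CPP} the dual subordinator $L^{(F)}$ satisfies $\F_{L^{(F)}}=\F$, hence $(F^{(L)},L^{(F)})$ is admissible for any $F^{(L)}\in\F$, and the duality transfers this back to $(F,L)$. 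The paper's route is shorter given the machinery already developed and reinforces the duality as a working tool; your route is self-contained, does not rely on the distributional equality (\ref{interestingequaldist}), and makes the mechanism (linear growth near zero, boundedness at infinity, matched against the defining integrability of $\nu_L$) fully explicit. The paper itself notes that the lemma ``could alternatively also be proved directly,'' and your argument is precisely such a direct proof.
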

\begin{proof}
See the Appendix.
\end{proof}

The integral definition of $H_1$ becomes an infinite series whenever the integrator $L$ is a (driftless) compound Poisson process, i.e.\ if $\Psi_L$ is bounded. Switching the roles of $L$ and $F$ according to the duality (\ref{interestingequaldist}), we also obtain a series representation for $H_1$ if $L^{(F)}$ is a (driftless) compound Poisson subordinator, i.e.\ if $\Psi_F$ is bounded. The following example, based on an idea originally due to \cite{bondesson82}, illustrates how this can be useful. We denote by ID$[0,\infty]$ the set of all infinitely divisible laws on $[0,\infty]$.

\begin{example}[Series representations for ID{$[0,\infty]$} from duality] \label{ex_duality}
Reconsidering Example~\ref{Example_Levy_case}, on the left-hand side of (\ref{interestingequaldist}) let $F(x)=\exp(-1)+(1-\exp(-1))\,\I{[1,\infty)}(x)$, for $x\geq 0$,  such that $\Psi_F(x)=1-\exp(-x)$ and $L^{(F)}=N$ is a Poisson process with unit intensity, whose unit exponential inter-arrival times we denote by $\{\epsilon_k\}_{k \geq 1}$ and the associated jump times are $\{\tau_k\}_{k\geq 1}$. Then (\ref{interestingequaldist}) becomes
\begin{gather*}
L_1 = \int_{0}^{\infty}-\log\big( F(s-)\big)\,\mathrm{d}L_s \stackrel{d}{=} \sum_{k \geq 1}S_{\nu_L}^{-1}(\underbrace{\epsilon_1+\ldots+\epsilon_k}_{\tau_k})\,\I{\{\underbrace{\epsilon_1+\ldots+\epsilon_k}_{\tau_k} \leq \nu_L((0,\infty])\}},
\end{gather*}
where $S_{\nu_L}(t):=\nu_L\big( (t,\infty]\big)$ is the survival function of the L\'evy measure $\nu_L$ of $L$, and $S_{\nu_L}^{-1}$ its generalized inverse. In particular, this series is almost surely finite in case of a compound Poisson distribution, i.e.\ if $\Psi_L$ is bounded (resp.\ $\nu_L$ is finite). This is more or less the representation of an infinitely divisible law in terms of a series representation involving only independent exponentials that \cite{bondesson82} proposes as a basis for his simulation ansatz (restricted to laws on $[0,\infty]$ in the present context). It is of particular use in those cases where the survival function of the L\'evy measure has an inverse $S_{\nu_L}^{-1}$ in closed form. If, in addition, the L\'evy measure is finite, i.e.\ one has a compound Poisson distribution, one obtains an exact simulation algorithm. When simulating this compound Poisson law, the representation is useful particularly if we have no simulation algorithm for the jump size distribution at hand, but we are able to compute the inverse of the Laplace transform of the jump size distribution in closed form.  
\end{example}

\begin{remark}[Alternative series representation for ID{$[0,\infty]$}]
Example \ref{example_LS} shows that Lemma \ref{lemma_repr}(b) provides a series representation for ID$[0,\infty]$ that is different from that in Example \ref{ex_duality}, namely
\begin{gather*}
L_1 \stackrel{d}{=} \sum_{k \geq 1}Z_k\,\I{\big\{(\epsilon_1+\ldots+\epsilon_k)\,\big(1-e^{-Z_k}\big) \leq  \Psi_L(1)\big\}},
\end{gather*}
where $\{Z_k\}_{k \geq 1}$ is an iid sequence distributed according to the probability measure $(1-e^{-z})\,\nu_L(\mathrm{d}z)/\Psi_L(1)$, independent of $\{\epsilon_k\}_{k \geq 1}$. This representation is always an infinite series, even when $L_1$ has a compound Poisson distribution. However, if a closed form of $S_{\nu_L}^{-1}$ is unknown but a simulation algorithm for the random variables $Z_k$ is known, this series representation might be preferred over the one in Example~\ref{ex_duality}.
\end{remark}


The series representation in Example \ref{ex_duality} was based on the choice $L=N$ of a Poisson process as integrator. In the sequel, we choose as integrator a compound Poisson process with unit exponential jumps. In this case, Lemma~\ref{lemma_bondesson} below shows that the law of $H_1$ lies in the so-called \emph{Bondesson family} BO$(0,\infty)$, see \cite{bondesson81}. This means that the survival function of the L\'evy measure equals the Laplace transform of a measure $\rho$ on $(0,\infty)$, called the \emph{Stieltjes measure}. From an analytical viewpoint, the Bernstein function $\Psi_H$, which is said to be \emph{complete} in this case, can be represented as
\begin{gather*}
\Psi_H(x) = \int_0^{\infty}\frac{x}{x+u}\,\rho(\mathrm{d}u),\quad x \geq 0,
\end{gather*}
with the Stieltjes measure $\rho$ on $(0,\infty)$ satisfying $\int_0^{\infty}(1+u)^{-1}\,\rho(\mathrm{d}u)<\infty$. There are examples for laws on BO$(0,\infty)$ for which the Stieltjes measure $\rho$ is way more convenient to handle than the associated L\'evy measure\footnote{See, e.g., Families 3, 5, 27, 28, 29, 33, 35, 45, 46, 63, 64, 88, and 89 in the list of complete Bernstein functions of \cite[Chapter 15]{schilling10}.}, we provide some examples below. For these cases, Lemma~\ref{lemma_bondesson} below provides a series representation in a similar spirit as that for ID$[0,\infty]$ in Example~\ref{ex_duality}.
\par
We fix $L$ as a compound Poisson subordinator with unit exponential jumps and unit intensity, i.e.\ $\Psi_{L}(x)=x/(x+1)$. Furthermore, we let $F \in \F$ be arbitrary, and only assume that the left-end point of the support of $X \sim F$ equals zero, which by Lemma~\ref{BF_mai18} is equivalent to postulating $\nu_F(\{\infty\})=0$, i.e.\ $\Psi_F$ has no killing. In this situation, the measure
\begin{gather}
\rho_F(A):=\nu_F\Big(\Big\{\frac{1}{u}\,:\,u \in A \Big\} \Big)
\label{stieltjesbiject}
\end{gather}
is well-defined for all Borel sets $A \in \mathcal{B}\big( (0,\infty)\big)$, and we observe that
\begin{gather*}
\int_0^{\infty}\frac{1}{1+t}\,\rho_F(\mathrm{d}t)=\int_0^{\infty}\frac{u}{1+u}\,\nu_F(\mathrm{d}u) \leq \int_0^{\infty}\min\{u,1\}\,\nu_F(\mathrm{d}u) < \infty,
\end{gather*}
so $\rho_F$ is a proper Stieltjes measure. To simplify notation, we denote
\begin{gather*}
\hat{\F}:=\{F \in \F\,:\,\nu_F(\{\infty\})=0\}
\end{gather*}
and obtain the following result. The series representation in part (c) is a special case of the series representation discussed in \cite[p.\ 862]{bondesson82}.
\begin{lemma}[Series representation in terms of the Stieltjes measure]\label{lemma_bondesson}
Let $F \in \hat{\F}$ and $L$ a compound Poisson subordinator with unit intensity and unit exponentially distributed jumps, i.e.\ $\Psi_L(x)=x/(x+1)$.
\begin{itemize}
\item[(a)] The law of $H_1$ is in $\text{BO}(0,\infty)\subset \text{ID}(0,\infty)$ with associated Stieltjes measure $\rho_F$.
\item[(b)] The mapping $F \mapsto \rho_F$, defined by (\ref{stieltjesbiject}) and Lemma~\ref{BF_mai18}, defines a bijection between $\hat{\F}$ and BO$(0,\infty)$. The inverse mapping $\rho \mapsto F_{\rho}$ is
\begin{gather*}
F_{\rho}(x) = \begin{cases}
e^{-g_{\rho}^{-1}(x)} &\mbox{, if }x<\rho\big( (0,\infty)\big)\\
1 & \mbox{, else}\\
\end{cases},\quad g_{\rho}(x):=\rho\big( (0,1/x)\big),
\end{gather*}
where $g^{-1}_{\rho}$ denotes the generalized inverse of the non-increasing function $g_{\rho}$.
\item[(c)] We have the following equality in law, with $\{\epsilon_k,\,J_k\}_{k \geq 1}$ iid unit exponentials:
\begin{gather*}
H_1 \stackrel{d}{=}\sum_{k \geq 1} J_k\,g^{-1}_{\rho}(\epsilon_1+\ldots+\epsilon_k)\,\I{\{\epsilon_1+\ldots+\epsilon_k \leq \rho( (0,\infty))\}}.
\end{gather*} 
\end{itemize}
\end{lemma}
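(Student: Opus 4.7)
The plan is to reduce all three claims to two ingredients: (i) the explicit L\'evy measure $\nu_L(\mathrm{d}u)=e^{-u}\,\mathrm{d}u$ of $\Psi_L(x)=x/(x+1)$, and (ii) Lemma \ref{BF_mai18}, which I will apply in the restricted setting $\nu_F(\{\infty\})=0$ corresponding to $\hat{\F}$. For part (a), I would start from the double integral (\ref{Psi_H_B}), carry out the inner $u$-integral against $e^{-u}\,\mathrm{d}u$ to obtain $\Psi_H(x)=\int_{(0,\infty)}\frac{xs}{1+xs}\,\nu_F(\mathrm{d}s)$, and then change variables $u=1/s$, i.e.\ push $\nu_F$ forward under the involution $T(s)=1/s$, to recognize the canonical Stieltjes representation $\Psi_H(x)=\int_0^\infty \frac{x}{x+u}\,\rho_F(\mathrm{d}u)$. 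The Stieltjes integrability of $\rho_F$ is already observed in the paragraph preceding the statement.

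For part (b), the map $F\mapsto\rho_F$ factors as $F\mapsto\nu_F\mapsto\rho_F$. The first arrow is the bijection of Lemma \ref{BF_mai18} restricted to $\hat{\F}$ (killing rate zero), whose image is precisely the L\'evy measures on $(0,\infty)$. The second arrow is push-forward under $T(u)=1/u$, obviously a bijection between measures on $(0,\infty)$. The only substantive check is that the L\'evy integrability condition $\int\min\{u,1\}\,\nu_F(\mathrm{d}u)<\infty$ matches the Stieltjes integrability condition $\int(1+u)^{-1}\,\rho_F(\mathrm{d}u)<\infty$ after the change of variable, which follows from the elementary sandwich $\tfrac12\min\{u,1\}\le \tfrac{u}{1+u}\le\min\{u,1\}$. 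For the explicit inverse I would use $S_{\nu_F}(t)=\nu_F\big((t,\infty)\big)=\rho_F\big((0,1/t)\big)=g_\rho(t)$, hence $S_{\nu_F}^{-1}=g_\rho^{-1}$, and plug into the inverse formula provided by Lemma \ref{BF_mai18}.

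For part (c), I would write the compound Poisson subordinator as $L_s=\sum_{k\ge1}J_k\,\I{\{\tau_k\le s\}}$ with $\tau_k=\epsilon_1+\cdots+\epsilon_k$, insert this jump decomposition into the pathwise Stieltjes integral defining $H_1$, and collapse it to the sum $H_1=\sum_{k\ge1}J_k\,\bigl(-\log F(\tau_k-)\bigr)$. Since $\tau_k$ is continuously distributed and $F$ has at most countably many discontinuities, $F(\tau_k-)=F(\tau_k)$ almost surely, and the explicit form of $F_\nu$ from Lemma \ref{BF_mai18}, combined with the identification $S_{\nu_F}^{-1}=g_\rho^{-1}$ recorded in part (b), turns each summand into $J_k\,g_\rho^{-1}(\tau_k)\,\I{\{\tau_k\le\rho((0,\infty))\}}$.

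The main delicate point will be clean bookkeeping of the boundary behaviour at $0$ and $\infty$: I must keep $\nu_F$ on $(0,\infty]$ (with possible atom at $\infty$ corresponding to killing) separate from $\rho_F$ on $(0,\infty)$, and the assumption $F\in\hat{\F}$ is precisely what removes the killing atom so that the pushforward under $T$ is a clean bijection without loss of mass at the endpoints. A secondary item is to certify that $H_1$ is well-defined: admissibility of the pair $(F,L)$ is automatic in this setup by Example \ref{ex_CPP}, since a compound Poisson subordinator with unit exponential jumps has finite mean, so $\F_L=\F\supset\hat{\F}$ and the series in (c) converges almost surely (it is in fact a.s.\ finite when $\rho((0,\infty))<\infty$).
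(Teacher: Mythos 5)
Your proposal is correct and follows essentially the same route as the paper: part (a) by switching the order of integration in (\ref{Psi_H_B}), integrating out $\nu_L$, and pushing $\nu_F$ forward under $s\mapsto 1/s$; part (b) by factoring through Lemma~\ref{BF_mai18} and checking that the L\'evy and Stieltjes integrability conditions transform correctly under this involution; and part (c) by reading off the series pathwise from the compound-Poisson structure of $L$. The only cosmetic difference is that the paper applies $\Psi_L(xu)=xu/(xu+1)$ directly rather than computing the integral of $1-e^{-xsu}$ against the explicit L\'evy density $e^{-u}\,\mathrm{d}u$.
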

\begin{proof}
See the Appendix.
\end{proof}

Like the series representation in Example \ref{ex_duality} for ID$[0,\infty]$ is useful if the L\'evy measure is nice, the representation in Lemma~\ref{lemma_bondesson}(c) can be used to construct simulation algorithms for infinitely divisible distributions from the Bondesson family, when the Stieltjes measure $\rho$ is nice. In case of a compound Poisson distribution the series is even finite, hence the simulation algorithm is exact. We provide some examples to demonstrate this procedure. 

\begin{example}[Exact simulation of some compound Poisson laws]\label{Ex_CPL}
Let $\rho$ be some finite measure on $(0,\infty)$, hence it automatically is a Stieltjes measure. We denote by $X$ a generic random variable in BO$(0,\infty)$ associated with this Stieltjes measure. There exists a distribution function $G$ of some non-negative random variable with $G(0)=0$ and a constant $\beta:=\rho\big( (0,\infty)\big) >0$ such that $\rho\big( (0,x]\big)=\beta\,G(x)$. The L\'evy measure $\nu$ associated with this Stieltjes measure $\rho$ is determined by its survival function, which satisfies $\nu\big( (x,\infty)\big)=\beta\,\varphi_G(x)$, where $\varphi_G(x):=\beta\,\int_0^{\infty}\exp(-x\,u)\,\mathrm{d}G(u)$ denotes the Laplace transform of $\mathrm{d}G$. In this case, we have that
\begin{gather*}
g_{\rho}^{-1}(y) = \frac{1}{G^{-1}(y/\beta)},\quad 0 <y<\beta,
\end{gather*}
where $G^{-1}$ is the generalized inverse of the distribution function $G$. Consequently, according to Lemma \ref{lemma_bondesson}(c) the compound Poisson distribution $X$ with intensity $\beta$ and jump size density $-\varphi_G^{'}$ has the finite series representation
\begin{gather}
X \stackrel{d}{=} \sum_{k \geq 1}\frac{J_k}{G^{-1}\big((\epsilon_1+\ldots+\epsilon_k)/\beta\big)}\,\I{\{\epsilon_1+\ldots+\epsilon_k \leq \beta\}},
\label{CPP_seriesrepr}
\end{gather}
with $\{\epsilon_k,J_k\}_{k \geq 1}$ iid unit exponentials. It is not difficult to come up with examples for $G$ such that $G^{-1}$ is in closed form, but neither is a simulation algorithm for the density $-\varphi_G^{'}$ at hand, nor is the inverse of $\varphi_G$ in closed form. In such a situation, (\ref{CPP_seriesrepr}) provides a convenient basis for an exact simulation algorithm. One particular example is given by Family 45 in \cite[Chapter~15]{schilling10}: we have $\beta = 0.5$ and $G(x)=\min\{x,1\}\,(2-\min\{x,1\})$  and obtain $G^{-1}(x)=1-\sqrt{1-x}$ for $x \in (0,1)$, leading to
\begin{gather*}
X \stackrel{d}{=}\sum_{k \geq 1}\frac{J_k\,\I{\{\epsilon_1+\ldots+\epsilon_k \leq 0.5\}}}{1-\sqrt{1-2\,(\epsilon_1+\ldots+\epsilon_k)}}.
\end{gather*}
The associated Bernstein function is given by $\Psi(x)=x\,(1+x)\,\log(1+1/x)-x$. 
\end{example}

\begin{example}[A few non-compound Poisson examples]\label{more_examples}
With a parameter $\theta>0$, we consider the complete, unbounded Bernstein functions
\begin{gather*}
\Psi_1(x) = \frac{x}{\sqrt{x+\theta}},\quad \Psi_2(x)=\frac{x}{\theta-x}\,\log\Big( \frac{\theta}{x}\Big),\quad \Psi_3(x)=\sqrt{x}\,\arctan\Big( \sqrt{\frac{x}{\theta}}\Big),
\end{gather*}
which correspond to Families 5, 33, and 64 in \cite[Chapter 15]{schilling10}. For each of them, the associated Stieltjes measure has a convenient form, namely
\begin{gather*}
\rho_1(\mathrm{d}x) = \frac{\I{\{x>\theta\}}\,\mathrm{d}x}{\pi\,\sqrt{x-\theta}},\quad \rho_2(\mathrm{d}x)=\frac{\mathrm{d}x}{\theta+x},\quad \rho_3(\mathrm{d}x)=\frac{\I{\{x>\theta\}}\,\mathrm{d}x}{2\,\sqrt{x}},
\end{gather*}
and the associated function $g_{\rho}^{-1}$ from Lemma \ref{lemma_bondesson} can be computed in closed form, to wit
\begin{gather*}
g_{\rho_1}^{-1}(x)=\frac{1}{\theta+(\pi\,x/2)^2},\quad g_{\rho_2}^{-1}(x)=\frac{1}{\theta\,(e^{x}-1)},\quad g_{\rho_3}^{-1}(x)=\frac{1}{(x+\sqrt{\theta})^2}.
\end{gather*}
\end{example}

\par
Figure~\ref{simulation_outcomes} presents, based on $100.000$ observations, the empirical and theoretical Bernstein function in all of the four cases. 

\begin{figure}[!ht]
\includegraphics[width=13cm]{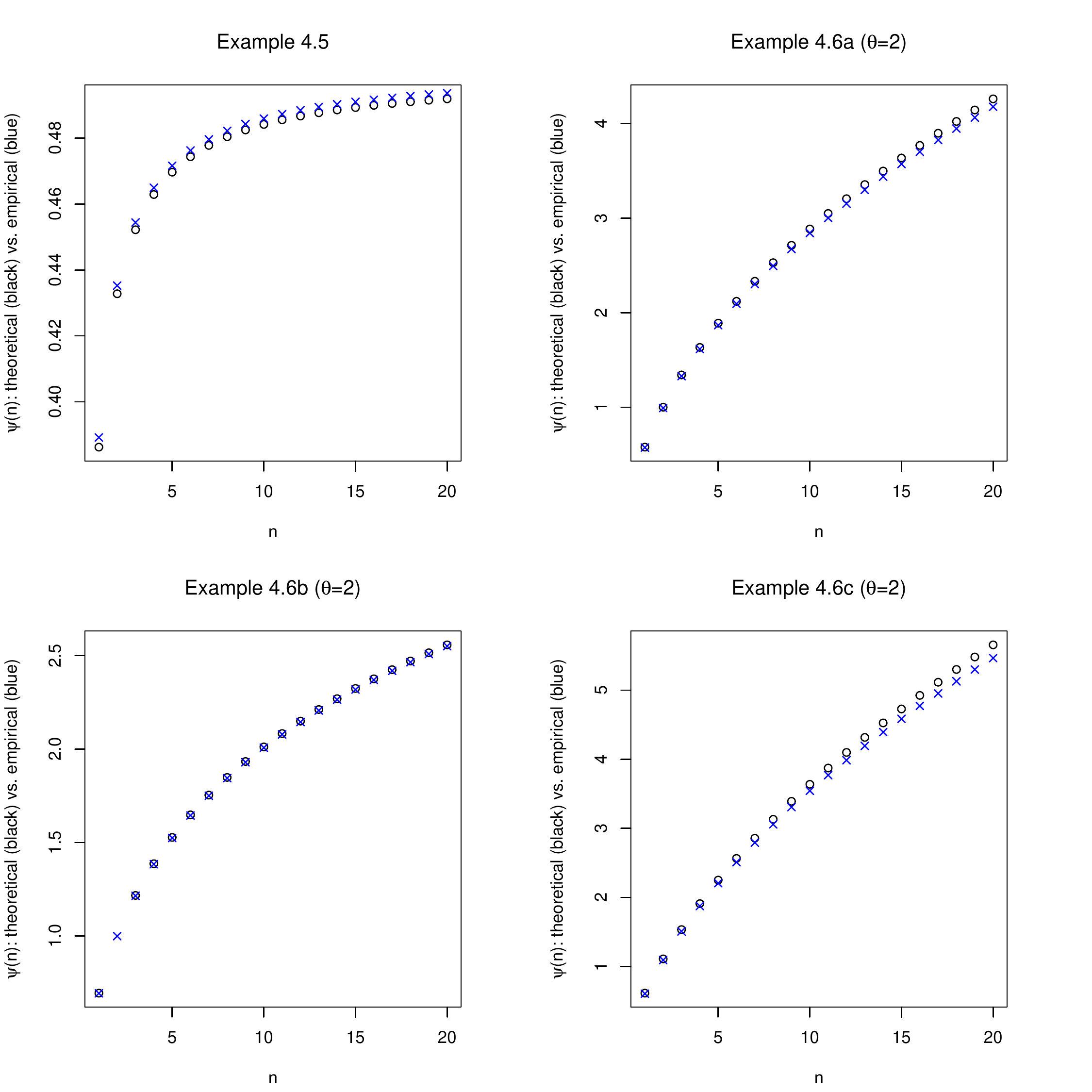}
\caption{Empirical test of the validity of the sampling routines from Examples~\ref{Ex_CPL} and \ref{more_examples}. The theoretical Bernstein functions evaluated at $\{1,\ldots,20\}$ (black circles) are compared to the estimated ones (blue crosses). The \textsf{R}-code to simulate from the described families is available upon request by the authors.}\label{simulation_outcomes}
\end{figure}

\section{Conclusion}\label{sec_conc}
We have studied a semi-parametric family of non-decreasing stochastic processes $H=\{H_t\}_{t \geq 0}$, which comprises (possibly killed) L\'evy subordinators without drift as a special case. Whereas a L\'evy subordinator $L$ is conveniently specified by a Bernstein function $\Psi_L$, the process $H$ is specified by a pair $(\Psi_F,\Psi_L)$ of two Bernstein functions. From a theoretical point of view, we have demonstrated that the parameterization in terms of a distribution function $F$ and a L\'evy subordinator $L$ provides a convenient apparatus to study distributional properties of $H$. In particular, we have established a canonical series representation and have studied the natural filtration of $H$, highlighting that the independent increment property is exclusive to the L\'evy subordinator subfamily. From a practical point of view, we have explained how to simulate exactly and accurately the $d$-variate extreme-value copula associated with each process $H$. Furthermore, we have used the derived setting to establish a series representation for infinitely divisible laws from the Bondesson family that is given in terms of its Stieltjes measure only.

\section*{Appendix: Proofs}

\begin{proof}[of Lemma \ref{STDF}]
Writing out the Riemann--Stieltjes definition of $H$, introducing for $N \in \IN$ and $R \gg 1$ the notation
\begin{gather*}
s_{n,N}^{(R)}:=\frac{1}{R}+\frac{n\,\big(R-\frac{1}{R}\big)}{N},\quad n=0,\ldots,N,
\end{gather*}
we have
\begin{align*}
H_t&=\lim_{R \rightarrow \infty}\int_{\frac{1}{R}}^{R}-\log\Big[ F\Big(\frac{s}{t}-\Big)\Big]\,\mathrm{d}L_s\\
&=\lim_{R \rightarrow \infty}\lim_{N\rightarrow \infty}\sum_{n=1}^{N}-\log\Big[ F\Big(\frac{s_{n-1,N}^{(R)}}{t}-\Big)\Big]\,\big(L_{s_{n,N}^{(R)}}- L_{s_{n-1,N}^{(R)}}\big).
\end{align*}
Using the bounded convergence theorem in $(\ast)$ and the stationary and independent increment property of $L$ in $(\ast\ast)$ implies
\begin{align*}
&\hspace{-0.5cm}\IE\Big[ e^{-\sum_{k=1}^{d}H_{t_k}}\Big] \\
& = \IE\Big[ \exp\Big\{-\lim_{R \rightarrow \infty}\lim_{N\rightarrow \infty}\sum_{n=1}^{N}\big( L_{s_{n,N}^{(R)}}- L_{s_{n-1,N}^{(R)}}\big)\,\sum_{k=1}^{d}-\log\Big[ F\Big(\frac{s_{n-1,N}^{(R)}}{t_k}-\Big)\Big] \Big\}\Big]\\
& \stackrel{(\ast)}{=} \lim_{R \rightarrow \infty}\lim_{N\rightarrow \infty}\IE\Big[ \exp\Big\{-\sum_{n=1}^{N}\big( L_{s_{n,N}^{(R)}}- L_{s_{n-1,N}^{(R)}}\big)\,\sum_{k=1}^{d}-\log\Big[ F\Big(\frac{s_{n-1,N}^{(R)}}{t_k}-\Big)\Big] \Big\}\Big]\\
& \stackrel{(\ast\ast)}{=} \lim_{R \rightarrow \infty}\lim_{N\rightarrow \infty}\prod_{n=1}^{N}\IE\Big[ \exp\Big\{- L_{\frac{R-\frac{1}{R}}{N}}\,\sum_{k=1}^{d}-\log\Big[ F\Big(\frac{s_{n-1,N}^{(R)}}{t_k}-\Big)\Big] \Big\}\Big]\\
& = \lim_{R \rightarrow \infty}\lim_{N\rightarrow \infty}\prod_{n=1}^{N}\exp\Big\{-\frac{R-\frac{1}{R}}{N}\,\int_{(0,\infty]}\Big(1-\prod_{k=1}^{d} F\Big(\frac{s_{n-1,N}^{(R)}}{t_k}-\Big)^{u} \Big)\,\nu_L(\mathrm{d}u)\Big\}\\
& = \lim_{R \rightarrow \infty}\exp\Big\{-\int_{\frac{1}{R}}^{R}\,\int_{(0,\infty]}\Big(1-\prod_{k=1}^{d} F\Big(\frac{s}{t_k}-\Big)^{u} \Big)\,\nu_L(\mathrm{d}u)\,\mathrm{d}s\Big\}\\
& = \exp\Big\{-\int_0^{\infty}\,\int_{(0,\infty]}\Big(1-\prod_{k=1}^{d} F\Big(\frac{s}{t_k}-\Big)^{u} \Big)\,\nu_L(\mathrm{d}u)\,\mathrm{d}s\Big\}.
\end{align*}
The order of the two remaining integrations can be switched by Tonelli's Theorem. When integrating with respect to $\mathrm{d}s$, it is further possible to change from $F(s-)$ to $F(s)$, since the at most countably many jump times of $F$ play no role in the integration. This finishes the proof.
\end{proof}

\begin{proof}[of Lemma \ref{lemma_technicality}]
The equivalence of $(a)$ and $(b)$ is obvious, as well as the fact that $(c)$ implies (a) and (b). The only non-obvious statement is that admissibility implies (c). Denote the biggest argument by $t_{[d]}:=\max\{t_1,\ldots,t_d\}$. From the probabilistic viewpoint the statement is pretty obvious, since admissibility means that $\IP(H_{t}<\infty)>0$ for all $t \geq 0$. But this implies that 
\begin{gather*}
\IP\Big( \sum_{j=1}^{d}H_{t_j}< \infty\Big)=\IP\Big( H_{t_{[d]}}< \infty\Big)>0,
\end{gather*}
since $t\mapsto H_t$ is non-decreasing, which implies the claim. Alternatively, we can argue fully analytically by computing
\begin{align*}
& \ell(t_1,\ldots,t_d,0,\ldots) \leq \int_{(0,\infty]} \int_0^{\infty}\Big(1-F\big( \frac{s}{t_{[d]}}\big)^{d\,y}\Big)\,\mathrm{d}s\,\nu_L(\mathrm{d}y) \\
& \qquad =t_{[d]}\,\int_{(0,\infty]}\Psi_F(d\,y)\,\nu_L(\mathrm{d}y) \leq t_{[d]}\,d\,\int_{(0,\infty]}\Psi_F(y)\,\nu_L(\mathrm{d}y) \stackrel{(\ast)}{<}\infty,
\end{align*}
where $(\ast)$ follows from ($L$-)admissibility and the above inequality from the estimate $g(d\,y) \leq d\,g(y)$, which holds for any concave function $g$ on $[0,\infty)$ with $g(0)=0$ and $d \in \IN$, such as $g=\Psi_F$. To see this, for $d \in \IN$, concavity and $g(0)=0$ imply that
\begin{gather*}
d\,g(y) = d\,g\Big( \frac{d-1}{d}\cdot 0+\frac{1}{d}\,d\,y\Big) \geq d\,\Big(\frac{d-1}{d}\,g(0)+\frac{1}{d}\,g(d\,y)\Big)=g(d\,y).
\end{gather*}
\end{proof}

\begin{proof}[of Lemma \ref{lemma_repr}]
As an auxiliary step, we first show that
\begin{align}
0=\lim_{x\searrow 0 }\log\big(F(x)\big)\,L_x=\lim_{x\nearrow \infty }\log\big(F(x)\big)\,L_x.
\label{partaproof}
\end{align}
To this end, we observe that 
\begin{align}\label{usefulinequality}
\Psi_F(u\,t)\leq \max\{u,1\}\,\Psi_F(t),\quad \forall u,t\geq 0,
\end{align}
which follows from $\Psi_F(x)=\int_0^\infty \big(1-e^{-x\,u}\big)\,\nu_F(\mathrm{d}u)$ and $(1-e^{-t\,u\,x})\leq \max\{u,1\}(1-e^{-t\,x})$.
\par 
Second, we find for $x>0$ the estimate
\begin{align}\label{usefulinequality2}
\nonumber x\,\big(1-F(x)^{u\,t}\big)&=\int_0^x \big(1-F(x)^{u\,t} \big)\,\mathrm{d}s\\
&\leq \int_0^x \big(1-F(s)^{u\,t} \big)\,\mathrm{d}s \leq \Psi_F(u\,t)\stackrel{(\ref{usefulinequality})}{\leq} \max\{u,1\}\, \Psi_F(t).
\end{align}
Now we use dominated convergence, justified by the admissibility condition $F \in \F_L$ and Equation (\ref{usefulinequality2}), to find
\begin{align}\label{usefulinequality3}
\lim_{x\rightarrow \ast}\int_0^\infty x\,\big(1-F(x)^{u\,t}\big)\nu_L(\mathrm{d}t)=\int_0^\infty \lim_{x\rightarrow \ast} x\,\big(1-F(x)^{u\,t}\big)\nu_L(\mathrm{d}t)=0,\quad \ast\in\{0,\infty\}.
\end{align}
The limit inside the integral being zero is obvious for $\ast=0$. For $\ast=\infty$ we use the following argument: Consider a random variable $Y$ with $\IP(Y\leq s)=F(s)^{u\,t}$, for $s\geq 0$ and fixed $u,t\geq 0$. Again we use dominated convergence, justified by $x\,\I{\{Y>x\}}\leq Y$ and 
\begin{align*}
\IE[Y]=\int_0^\infty \IP(Y>x)\,\mathrm{d}x=\int_0^\infty \big(1-F(x)^{u\,t}\big)\,\mathrm{d}x=\Psi_F(u\,t)<\infty
\end{align*}
to find 
\begin{align*}
\lim_{x\rightarrow\infty} x\,\big(1-F(x)^{u\,t}\big)=\lim_{x\rightarrow\infty} x\,\IE[\I{\{Y>x\}}]= \IE[\lim_{x\rightarrow\infty} x\,\I{\{Y>x\}}]=0.
\end{align*}
Finally, the random variable $-\log(F(x))\,L_x$ is infinitely divisible and its Bernstein function is found to be $\int_0^\infty x\,\big(1-F(x)^{u\,t}\big)\nu_L(\mathrm{d}t)$. In Equation~(\ref{usefulinequality3}) we have shown that this tends to zero as $x\rightarrow\ast$, for all $u>0$, which establishes the claimed identity (\ref{partaproof}).
\begin{itemize}
\item[(a)] We use the integration-by-parts formula from \cite[Theorem~21.67, p.~419]{hewitt69} and (\ref{partaproof}) to find
\begin{align*}
H_t&=\int_0^\infty -\log\big(F(\frac{s}{t}-)\big)\,\mathrm{d}L_s=\lim_{R\rightarrow\infty}\int_{1/R}^R -\log\big(F(\frac{s}{t}-)\big)\,\mathrm{d}L_s\\
&=\lim_{R\rightarrow\infty}\Big\{ \int_{1/R}^R L_s\, \mathrm{d}\Big(\log\big(F(\frac{s}{t})\big)\Big) \\
&\qquad +L_R\,\Big(-\log\big(F(\frac{R}{t})\big)\Big)- L_{1/R-}\,\Big(-\log\big(F(\frac{1}{R\,t}-)\big)\Big)   \Big\}\\
&=\int_0^\infty L_{s} \mathrm{d}\big(\log(F(\frac{s}{t})\big)+0+0=\int_0^\infty L_{s\, t} \mathrm{d}\big(\log(F(s))\big),\quad t>0.
\end{align*}
\item[(b)] Denoting by $\delta_{(x,z)}$ the Dirac measure at a point $(x,z)$ in the plane, we note that $P:=\sum_{k \geq 1}\delta_{(\epsilon_1+\ldots+\epsilon_k,Z_k)}$ is a Poisson random measure on $[0,\infty)\times (0,\infty]$ with mean measure $\mathrm{d}x \times \big( \frac{\Psi_F(z)}{\Psi_H(1)}\,\nu_L(\mathrm{d}z) \big)$ by \cite[Proposition 3.8]{resnick87}. Consequently, denoting the stochastic process on the right-hand side of the claim in statement (b) by $\{\tilde{H}_t\}_{t \geq 0}$, the Laplace functional formula for Poisson random measure \cite[Proposition 3.6]{resnick87} yields
\begin{align*}
\IE\Big[ e^{-\sum_{j=1}^{d}\tilde{H}_{t_j}}\Big] &= \exp\Big( -\int_{(0,\infty]} \int_{0}^{\infty} \Big(1-\prod_{j=1}^{d}F\Big( \frac{x}{t_j}-\Big)^{z}\Big)\,\mathrm{d}x\,\nu_L(\mathrm{d}z)\Big)\\
& = e^{-\ell(t_1,\ldots,t_d,0,0,\ldots)}=\IE\Big[ e^{-\sum_{j=1}^{d}{H}_{t_j}}\Big]
\end{align*}
for arbitrary $t_1,\ldots,t_d \geq 0$ and $d \in \IN$, establishing the claim.
\end{itemize}
\end{proof}

\begin{proof}[of Lemma \ref{lemma_CP}]
First of all, $\Psi_H$ may be represented as
\begin{gather}
\Psi_H(x) = \int_0^{\infty}\Psi_L\Big( -\log\big( F(s)^{x}\big)\Big)\,\mathrm{d}s,
\label{onerepr_PsiH}
\end{gather}
which follows from (\ref{anotherrepr_PsiH}) with the help of Tonelli's Theorem. The sufficiency of the conditions (a) and (b) is clear from this representation, since $\Psi_L$ is bounded and the improper integral $\int_0^{\infty}[\ldots]\, \mathrm{d}s$ is actually a finite integral $\int_0^{u_F}[\ldots]\, \mathrm{d}s$, where $u_F$ is the right-end point of the support of $\mathrm{d}F$. Necessity is more difficult to observe. To this end, assume that $\Psi_{L}$ is unbounded. There is an $\epsilon>0$ and a $\delta>0$ such that $f(z):=-\log(F(z))>\delta$ for $z\in(0,\epsilon)$ by the condition $\IE[X]>0$ in the definition of $\F$. Consequently,
\begin{gather*}
\Psi_H(u) \geq \int_0^{\epsilon}\Psi_{L}(u\,\delta)\,\mathrm{d}s = \Psi_{L}(u\,\delta)\,\epsilon
\end{gather*} 
is obviously unbounded. Hence, we already see that $\Psi_{L}$ needs to be bounded. In this case, $L$ is of compound Poisson type, i.e.\ there is a Laplace transform $\varphi$ of a positive random variable on $(0,\infty]$ and a number $\beta>0$ such that $\Psi_{L}=\beta\,(1-\varphi)$. Consequently, we observe from (\ref{onerepr_PsiH}) that
\begin{gather*}
\Psi_H(u) = \beta\,\int_0^{\infty}\Big(1-\varphi\big( u\,f(s)\big)\Big)\,\mathrm{d}s.
\end{gather*} 
By assumption, we know that $\Psi_H$ is bounded, so we see with the help of Fatou's Lemma that 
\begin{align*}
\infty &> \lim_{u \rightarrow \infty}\Psi_H(u) = \lim_{u \rightarrow \infty}\beta\,\int_0^{\infty}\Big(1-\varphi\big( u\,f(s)\big)\Big)\,\mathrm{d}s\\
& \geq \beta\,\int_0^{\infty}\Big(1-\liminf_{u \rightarrow \infty}\varphi\big( u\,f(s)\big)\Big)\,\mathrm{d}s.
\end{align*}
But $\varphi$ tends to zero as $u \rightarrow \infty$, since it is the Laplace transform of a positive random variable. Consequently, the assumption $f(z)>0$ for arbitrarily large $z>0$ leads to the contradiction $\infty>\infty$. Rather, we must have that $f(z)=0$ for $z \geq T$ with some finite $T$, which turns the last inequality into
\begin{gather*}
\infty>\beta \,\int_0^{T}1\,\mathrm{d}s = \beta\,T,
\end{gather*}
and is not a contradiction.
\end{proof}

\begin{proof}[of Lemma \ref{lemma_killing}]
Sufficiency of the conditions (a) and (b): By Lemma~\ref{BF_mai18}, $\Psi_F$ has killing if and only if the left-end point of the support of $X \sim F$ is strictly positive. In this case, $\Psi_F(x)>\epsilon>0$ for all $x>0$ and one immediately observes from (\ref{anotherrepr_PsiH}) that $H$ has killing in this case (since $L$, hence $\nu_L$ is non-zero by assumption). Also, if $\Psi_L$ has killing, $\Psi_H(x)>\epsilon>0$ from (\ref{onerepr_PsiH}) for all $x>0$, hence $H$ has killing. 
\par
Necessity of the conditions (a) and (b): Assume that neither (a) nor (b) holds. Denoting the L\'evy measure associated with the Bernstein function $\Psi_F$ by $\nu_F$, (\ref{anotherrepr_PsiH}) can be re-written as
\begin{align*}
\Psi_H(x) &= \int_{(0,\infty]}\int_{(0,\infty]}\big( 1-e^{-x\,u\,s}\big)\,\nu_F(\mathrm{d}s)\,\nu_L(\mathrm{d}u)\\
&=\int_{(0,\infty)}\int_{(0,\infty)}\big( 1-e^{-x\,u\,s}\big)\,\nu_F(\mathrm{d}s)\,\nu_L(\mathrm{d}u),
\end{align*}
where the last equality follows from the assumptions that $\nu_L(\{\infty\})=\nu_F(\{\infty\})=0$. Taking the limit as $x \searrow 0$ on both sides of this equation, and using the bounded convergence theorem, it follows that $\lim_{x \searrow 0}\Psi_H(x)=0$, so $H$ has no killing.
\end{proof}

\begin{proof}[of Lemma \ref{lemma_filt}]
We denote $L_t:=\sum_{j=1}^{N_t}J_j$ with jump sizes $\{J_j\}_{j \geq 1}$ and Poisson process $N$, whose sequence of jump-times we denote by $\{\tau_k\}_{k\in\IN}$.
\par
The inclusion `$\subseteq$' is immediate from the representation $H_t=\sum\limits_{k:\tau_k\leq u_F\,t}-\log\big(F\big(\frac{\tau_k}{t}-\big)\big)J_k$.
\par
The inclusion `$\supseteq$' requires us to recover the jump times $\{\tau_k\}$ and jump sizes $\{J_k\}$ of the process $L$ up to time $u_F\,t$ from observing the process $H$ up to time $t>0$. This can be done inductively. 
\par 
If $H_t=0$, then $N_{u_F\,t}=0$ and we are done. Else, we recover the first jump time by $\tau_1:=\inf\{s\in[0,t]:H_s>0\}$ as well as $J_1=\lim_{\epsilon \searrow 0}H_{\tau_1+\epsilon}/-\log\big(F\big(\frac{\tau_1}{\tau_1+\epsilon}\big)\big)$.


\par 
If $H_t=-\log\big(F\big(\frac{\tau_1}{t}-\big)\big)\,J_1$, then $\tau_2>u_F\,t$ and we are done. Else, we recover the second jump time by $\tau_2:=\inf\{s\in(\tau_1,t]:H_s+\log\big(F\big(\frac{\tau_1}{s}-\big)\big)\,J_1>0\}$ as well as 
\begin{align*}
J_2=\lim_{\epsilon \searrow 0}\frac{H_{\tau_2+\epsilon}+\log\big(F\big(\frac{\tau_1}{\tau_2+\epsilon}\big)\big)\,J_1}{-\log\big(F\big(\frac{\tau_2}{\tau_2+\epsilon}\big)\big)}.
\end{align*}
This procedure is repeated until $\tau_n>u_F\,t$.
\end{proof}

\begin{proof}[that $z \mapsto H_z$ is holomorphic in Example \ref{ex_galambos}]
Using \cite[Theorem~7.2, p.~124]{gilman07}, it is sufficient to prove that the defining series of $H_z$ converges uniformly on all compact subsets of $\mathbb{C}_+$. Each compact subset of $\mathbb{C}_+$ is contained within a set of the form $\{z \in \mathbb{C}_+\,:\,\epsilon \leq |z| \leq c,\,\mathcal{R}(z)\geq \epsilon\}$ for some constants $\infty>c>\epsilon>0$. On this set, we have for arbitrary real $x>0$ that 
\begin{gather}
\Big| e^{-\frac{x}{z}}\Big| = e^{-\frac{x\,\mathcal{R}(z)}{|z|^2}} \leq e^{-\frac{x\,\epsilon}{c^2}}.
\label{shortref_galambos}
\end{gather}   
Furthermore, the law of the iterated logarithm implies that (a.s.)
\begin{align*}
\liminf_{k \rightarrow \infty}\frac{\tau_k-k}{\sqrt{2\,k\,\log(\log(k))}}=-1.
\end{align*} 
Consequently, we obtain for almost all $k \in \IN$ the estimate
\begin{align}
\tau_k = k+\sqrt{2\,k\,\log(\log(k))}\,\frac{\tau_k-k}{\sqrt{2\,k\,\log(\log(k))}} \geq  k-\sqrt{2\,k\,\log(\log(k))}\,(1+\epsilon) \geq k/2.
\label{shortref_galambos2}
\end{align}  
These estimates, together with the identity $-\log(1-x)=\sum_{m \geq 1}x^m/m$ for real $x \in [0,1)$, imply for $n$ large enough that
\begin{align*}
\Big|\sum_{k \geq n}-\log\Big( 1-e^{-\frac{\tau_k}{z}}\Big)\Big| & \leq \sum_{k \geq n} \sum_{m \geq 1}\frac{1}{m}\,\Big|e^{-\frac{\tau_k\,m}{z}}\Big| \stackrel{(\ref{shortref_galambos})}{\leq} \sum_{k \geq n} \sum_{m \geq 1}\frac{1}{m}\,e^{-\frac{\tau_k\,m\,\epsilon}{c^2}}\\
& \stackrel{(\ref{shortref_galambos2})}{\leq} \sum_{k \geq n} \sum_{m \geq 1}\frac{1}{m}\,e^{-\frac{k\,m\,\epsilon}{2\,c^2}} = \sum_{m \geq 1}\frac{1}{m}\,\frac{e^{-\frac{n\,m\,\epsilon}{2\,c^2}}}{1-e^{-\frac{m\,\epsilon}{2\,c^2}}}\\
& \leq \frac{1}{1-e^{-\frac{\epsilon}{2\,c^2}}}\,\frac{e^{-\frac{n\,\epsilon}{2\,c^2}}}{1-e^{-\frac{n\,\epsilon}{2\,c^2}}}.
\end{align*}
Since the last expression tends to zero as $n \rightarrow \infty$, uniform convergence of the defining series on all compact subsets of $\mathbb{C}_+$ is shown.
\end{proof}

\begin{proof}[of Lemma \ref{Gumbel_generalized}]
We approximate $F \in \F$ by the sequence of distribution functions $F_n(x):=\I{\{x\geq n\}}+\I{\{x< n\}}\,F(x)$, for $n \in \IN$, and observe that
\begin{gather*}
H^{(n)}_t:=H_t^{(F_n,L)}=\int_0^n-\log F (s/t -)\,\mathrm{d}L_s \longrightarrow H_t=H_t^{(F,L)} \mbox{ for }n\rightarrow\infty.
\end{gather*}
This, in turn, implies that $H_{t+x}^{(n)}-H_t^{(n)}\longrightarrow H_{t+x}-H_t$ for $(n\rightarrow\infty)$. As shown in the considerations preceding the lemma, the random variable $H_{t+x}^{(n)}-H^{(n)}_t$ equals the sum of a (non-negative) part $X^{(n)}_2$ that is measurable with respect to $\mathcal{F}^{H^{(n)}}_t$, and another (non-negative) part $X_1^{(n)}$ which is independent thereof and infinitely divisible. We denote the Bernstein function associated with $X_1^{(n)}$ by $\Psi_1^{(n)}$ and compute, using (\ref{bernstein_psi1}) and (\ref{LevyKhinchin}), that 
\begin{align*}
\Psi^{(n)}_1(\alpha)&= x\,\int_0^n \Psi_L\Big(-\alpha\,\log F\big(\frac{x\,y+n\,t}{x+t}\big)\Big) \,\mathrm{d} y, \\
	&= x\,\int_0^n \int_{(0,\infty]}\Big(1-F\big(\frac{x\,y+n\,t}{x+t}\big)^{\alpha z}\Big)\, \nu_L(\mathrm{d}z) \,\mathrm{d} y, \\
	&= (x+t)\,\int_{n\,t/(t+x)}^n \int_{(0,\infty]}\big(1-F(s)^{\alpha z}\big)\, \nu_L(\mathrm{d}z) \,\mathrm{d} y, \\
	&\leq (x+t)\,n\,\big(1-\frac{t}{t+x}\big) \int_{(0,\infty]}\Big(1-F\big(\frac{n\,t}{t+x}\big)^{\alpha z}\Big)\, \nu_L(\mathrm{d}z) \,\mathrm{d} y, \\
	&=x\, \frac{t+x}{t}\, \frac{t\,n}{t+x}\,\int_{(0,\infty]}\Big(1-F\big(\frac{n\,t}{t+x}\big)^{\alpha z}\Big)\, \nu_L(\mathrm{d}z) \,\mathrm{d} y \quad \stackrel{(\ref{usefulinequality3})}{\longrightarrow} 0, \quad (n\rightarrow\infty).
\end{align*}
Consequently, $X_1^{(n)}$ converges to zero in law, and hence converges to zero almost surely, since the limit is a constant. This implies that $X_2^{(n)}$ converges to $H_{t+x}-H_t$. Since $X_2^{(n)}$ is measurable with respect to $\mathcal{F}^{H^{(n)}}_t$ for all $n$, we conclude that $H_{t+x}-H_t$ is measurable with respect to $\mathcal{F}^H_t$,  yielding the claim.
\end{proof}

\begin{proof}[of Lemma \ref{lemma_simpleadmissible}]
The conditions on $\Psi_F$ imply that it is the Bernstein function associated with a compound Poisson subordinator whose jump size distribution has finite mean. Example~\ref{ex_CPP} shows that $\F_{L^{(F)}}=\F$. In particular, $(F^{(L)},L^{(F)})$ is admissible, which implies admissibility of $(F,L)$ by the aforementioned duality.
\end{proof}

\begin{proof}[of Lemma \ref{lemma_bondesson}]
\begin{itemize}
\item[(a)] We compute, using (\ref{Psi_H_B}), $\Psi_L(x)=x/(x+1)$, and (\ref{stieltjesbiject}) that
\begin{align*}
\Psi_H(x) &=  \int_0^{\infty}\Psi_L(x\,u)\,\nu_F(\mathrm{d}u)= \int_0^{\infty}\frac{x\,u}{x\,u+1}\,\nu_F(\mathrm{d}u)  =   \int_0^{\infty}\frac{x}{x+\frac{1}{u}}\,\nu_F(\mathrm{d}u) \\
& =  \int_0^{\infty}\frac{x}{x+u}\,\rho_F(\mathrm{d}u).
\end{align*}
\item[(b)] We recall from Lemma~\ref{BF_mai18} that $F \mapsto \nu_F$ is a bijection between $\hat{\F}$ and L\'evy measures on $(0,\infty)$. Furthermore, it is clear that the mapping $\nu_F \mapsto \rho_F$ is injective. Left to show is only that every Stieltjes measure is attainable, i.e.\ that $\nu_F \mapsto \rho_F$ is surjective. To this end, it is sufficient to show for an arbitrary Stieltjes measure $\rho$ that the measure 
\begin{gather*}
\nu(A):=\rho\Big(\Big\{\frac{1}{u}\,:\,u \in A \Big\} \Big),\quad A \in \mathcal{B}\big( (0,\infty)\big),
\end{gather*}
is a L\'evy measure on $(0,\infty)$. To this end, we observe that
\begin{align*}
\nu\big( (1,\infty)\big)&=\rho\big( (0,1)\big) \leq \int_0^{1}\frac{2}{1+t}\,\rho(\mathrm{d}t) \leq 2\,\int_0^{\infty}\frac{1}{1+t}\,\rho(\mathrm{d}t) <\infty,\\
\int_0^{1}u\,\nu(\mathrm{d}u) & = \int_1^{\infty}\frac{1}{t}\,\rho(\mathrm{d}t) \leq \int_1^{\infty}\frac{2}{t+1}\,\rho(\mathrm{d}t)  \leq 2\,\int_0^{\infty}\frac{1}{t+1}\,\rho(\mathrm{d}t)<\infty.
\end{align*}
The claimed expression for $F_{\rho}$ can be retrieved directly from Lemma~\ref{BF_mai18}, while noticing that $\rho_F\big((0,1/x)\big)=\nu_F\big( (x,\infty)\big)$.
\item[(c)] This is a direct consequence of part (b) and the definition of the process $H$.
\end{itemize}
\end{proof}


\begin{thebibliography}{}
\bibitem[Bernhart et al.\ (2015)]{bernhart15} G.\ Bernhart, J.-F.\ Mai, M.\ Scherer, On the construction of low-parametric families of min-stable multivariate exponential distributions in large dimensions, {Dependence Modeling} \textbf{3} (2015) pp.\ 29--46. 
\bibitem[Bertoin (1999)]{bertoin99} J.\ Bertoin, Subordinators: Examples and Applications, in \textit{\'Ecole d'\'Et\'e de Probabilit\'es de Saint-Flour XXVII-1997, Lecture Notes in Mathematics, Springer} {\textbf 1717} (1999) pp.\ 1--91.
\bibitem[Bondesson (1981)]{bondesson81} L.\ Bondesson, Classes of infinitely divisible distributions and densities, { Zeitschrift f\"ur Wahrscheinlichkeitstheorie und verwandte Gebiete} \textbf{57:1} (1981) pp.\ 39--71.
\bibitem[Bondesson (1982)]{bondesson82} L.\ Bondesson, On simulation from infinitely divisible distributions, {Advances in Applied Probability} \textbf{14} (1982) pp.\ 855--869. 
\bibitem[Cuadras, Aug\'e (1981)]{cuadras81} C.M.\ Cuadras, J.\ Aug\'e, A continuous general multivariate distribution and its properties, Communication in Statistics - Theory and Methods \textbf{10} (1981) pp.\ 339--353.
\bibitem[De Haan (1984)]{dehaan84} L.\ De Haan, A spectral representation for max-stable processes, {Annals of Probability} \textbf{12:4} (1984) pp.\ 1194--1204.
\bibitem[De Haan, Resnick (1977)]{dehaan77} L.\ De Haan, S.I.\ Resnick, Limit theory for multivariate sample extremes,   {Zeitschrift f\"ur Wahrscheinlichkeitstheorie und verwandte Gebiete} \textbf{40}  (1977) pp.\ 317--337.
\bibitem[Dombry et al.\ (2016)]{dombry16} C.\ Dombry, S.\ Engelke, M.\ Oesting, Exact simulation of max-stable processes, Biometrika \textbf{103} (2016) pp.\ 303--317.
\bibitem[Es-Sebaiy, Ouknine (2008)]{ouknine08} K.\ Es-Sebaiy, Y.\ Ouknine, How rich is the class of processes which are infinitely divisible with respect to time, {Statistics and Probability Letters} \textbf{78} (2008) pp.\ 537--547.
\bibitem[Galambos (1975)]{galambos75} J.\ Galambos, Order statistics of samples from multivariate distributions, {Journal of the American Statistical Association} \textbf{70} (1975) pp.\ 674--680.
\bibitem[Gilman et al.~(2007)]{gilman07} J.P.\ Gilman, I.\ Kra, R.E.\ Rodriguez, Complex Analysis: In the Spirit of Lipman Bers, \textit{Springer} (2007).
\bibitem[Gnedin, Pitman (2008)]{gnedin08} A.\ Gnedin, J.\ Pitman, Moments of convex distribution functions and completely alternating sequences, \textit{Probability and Statistics: Essays in Honor of David A. Freedman} \textbf{2} (2008) pp.\ 30--41.
\bibitem[Gudendorf, Segers (2009)]{gudendorf09} G.\ Gudendorf, J.\ Segers, Extreme-value copulas, In: Copula Theory and its Applications, (P.\ Jaworski, F. Durante, W.K. H\"ardle, T. Rychlik, T. Eds.), \textit{Springer, New York} (2009) pp.\ 129--145.
\bibitem[Gumbel (1960)]{gumbel60} E.J.\ Gumbel, Bivariate exponential distributions, {Journal of the American Statistical Association} \textbf{55} (1960) pp.\ 698--707.
\bibitem[Gumbel (1961)]{gumbel61} E.J.\ Gumbel, Bivariate logistic distributions, {Journal of the American Statistical Association} \textbf{56} (1961) pp.\ 335--349.
\bibitem[Hakassou, Ouknine (2012)]{ouknine12} A.\ Hakassou, Y.\ Ouknine, A contribution to the study of IDT processes, {\textit working paper} (2012).
\bibitem[Hewitt, Stromberg (1969)]{hewitt69} E.\ Hewitt, K.\ Stromberg, Real and abstract analysis, \textit{Springer-Verlag, Heidelberg} (1969).
\bibitem[Joe (1997)]{Joe97} H.\ Joe, Multivariate Models and Dependence Concepts, \textit{Chapman \& Hall, Boca Raton} (1997).
\bibitem[Kopp, Molchanov (2018)]{molchanov18} C. Kopp, I. Molchanov, Series representations of time-stable stochastic processes, Probability and Mathematical Statistics (2018, in press).
\bibitem[Mai (2018a)]{mai17} J.-F.\ Mai, Extreme-value copulas associated with expected scaled maxima of independent random variables, {Journal of Multivariate Analysis} \textbf{166} (2018) pp.\ 50--61.
\bibitem[Mai (2018b)]{mai18} J.-F.\ Mai, Canonical spectral representation for exchangeable max-stable sequences, {working paper} (2018).
\bibitem[Mai, Scherer (2009)]{maischerer09} J.-F.\ Mai, M.\ Scherer, L\'evy-frailty copulas, {Journal of Multivariate Analysis} \textbf{100} (2009) pp.\ 1567--1585.
\bibitem[Mai, Scherer (2011)]{mai11} J.-F. Mai, M. Scherer, Reparameterizing Marshall--Olkin copulas with applications to sampling, {Journal of Statistical Computation and Simulation} \textbf{81} (2011) pp.\ 59--78.
\bibitem[Mai, Scherer (2014)]{maischerer13} J.-F.\ Mai, M.\ Scherer, Characterization of extendible distributions with exponential minima via processes that are infinitely divisible with respect to time, {Extremes} \textbf{17} (2014) pp.\ 77--95.
\bibitem[Mai, Scherer (2017)]{maischerer17} J.-F.\ Mai, M.\ Scherer, Simulating Copulas, 2nd edition, {World Scientific Press} (2017).
\bibitem[Mansuy (2005)]{mansuy05} R.\ Mansuy, On processes which are infinitely divisible with respect to time, \textit{working paper} (2005).
\bibitem[Marshall, Olkin (1967)]{marshall67} A.W. Marshall, I. Olkin, A multivariate exponential distribution, {Journal of the American Statistical Association} \textbf{62} (1967) pp.\ 30--44.        
\bibitem[Nelsen (2006)]{nelsen06} R.B.\ Nelsen, An Introduction to Copulas, 2nd edition, \textit{Springer, New York} (2006).
\bibitem[Pickands (1981)]{pickands81} J.\ Pickands, Multivariate extreme value distributions, \textit{Proceedings of the 43rd Session ISI, Buenos Aires} (1981) pp.\ 859--878.
\bibitem[Resnick (1987)]{resnick87} S.I.\ Resnick, Extreme values, regular variation and point processes, \textit{Springer-Verlag} (1987).
\bibitem[Ressel (2013)]{ressel13} P.\ Ressel, Homogeneous distributions and a spectral representation of classical mean values and stable tail dependence functions, {Journal of Multivariate Analysis} \textbf{117} (2013) pp.\ 246--256. 
\bibitem[Schilling et al.\ (2010)]{schilling10} R.\ Schilling, R.\ Song, Z.\ Vondracek, Bernstein Functions, \textit{De Gruyter} (2010).
\bibitem[Schlather (2002)]{schlather02} M.\ Schlather, Models for stationary max-stable random fields, {Extremes} \textbf {5} (2002) pp.\ 33--44.
\bibitem[Segers (2012)]{segers12} J.\ Segers, Max-stable models for multivariate extremes, {Statistical Journal} \textbf{10} (2012) pp.\ 61--82.
\end{thebibliography}
\end{document}